 \theoremstyle{plain}
\newtheorem{theorem}{Theorem}[section]
 \newtheorem{proposition}[theorem]{Proposition}
 \newtheorem{lemma}[theorem]{Lemma}
\newtheorem{remark}{Remark}[section]
\newcommand{\ind}{{\bf 1}}
 \def\beqlb{\begin{eqnarray}}\def\eeqlb{\end{eqnarray}}
 \def\beqnn{\begin{eqnarray*}}\def\eeqnn{\end{eqnarray*}}
 \def\qed{\hfill$\Box$\medskip}
\newcommand{\bcen}{\begin{center}}
\newcommand{\ecen}{\end{center}}
\newcommand{\bgeqn}{\begin{equation}}
\newcommand{\edeqn}{\end{equation}}
\journal{Journal of \LaTeX\ Templates}
\begin{document}

\begin{frontmatter}

\title{On the empty balls of a critical super-Brownian motion}
\tnotetext[mytitlenote]{Jie Xiong's research is supported in part by NSFC grants 61873325, 11831010 and
Southern University of Science and Technology Start up found Y01286120.}

\author[mymainaddress]{Jie xiong}
\ead{xiongj@sustech.edu.cn}

\author[mysecondaryaddress]{Shuxiong Zhang \corref{mycorrespondingauthor}}
\cortext[mycorrespondingauthor]{Corresponding author}
\ead{shuxiong.zhang@mail.bnu.edu.cn}
\address[mymainaddress]{Department of Mathematics and SUSTech International center for Mathematics, Southern University of
Science and Technology, Shenzhen, China}
\address[mysecondaryaddress]{Department of Mathematics, Southern University of
Science and Technology, Shenzhen, China}


\begin{abstract}

Let $\{X_t\}_{t\geq0}$ be a $d$-dimensional critical super-Brownian motion started from a Poisson random measure whose intensity is the Lebesgue measure. Denote by $R_t:=\sup\{u>0: X_t(\{x\in\mathbb{R}^d:|x|< u\})=0\}$ the radius of the largest empty ball centered at the origin of $X_t$. In this work, we prove that for $r>0$,
$$\lim_{t\to\infty}\mathbb{P}\left(\frac{R_t}{t^{(1/d)\wedge(3-d)^+}}\geq r\right)=e^{-A_d(r)},$$
where $A_d(r)$  satisfies $\lim_{r\to\infty}\frac{A_d(r)}{r^{|d-2|+d\ind_{\{d=2\}}}}=C$ for some $C\in(0,\infty)$ depending only on $d$.
\end{abstract}

\begin{keyword}
Super-Brownian motion\sep empty ball\sep historical super-Brownian motion\sep Feynman-Kac representation.
\MSC[2020] 60J68\sep 60F05\sep 60G57
\end{keyword}

\end{frontmatter}

\section{Introduction and Main results}
\subsection{Introduction}
   In this work, we consider a $d$-dimensional measure-valued Markov process $\{X_t\}_{t\geq0}$ , called super-Brownian motion (henceforth SBM). For convenience of the reader, we give a brief introduction to the SBM and some pertinent results needed in this article.
   \par

   To characterize the SBM, we first introduce some notations. Let $p>d$ be a constant. Define $\phi_p(x):=(1+|x|^2)^{-p/2},~x\in\mathbb{R}^d$. Denote by
   $$M_p:=\left\{\mu~\text{is a locally finite measure on}~\mathbb{R}^d:\int_{\mathbb{R}^d}\phi_p(x)\mu(dx)<\infty\right\}$$
   the space of $p$-tempered measures; see Etheridge \cite[p23]{Etheridge}. We equip $M_p$ with the topology such that $\mu_n$ converges to $\mu$ in $M_p$ if and only if
   $$\lim_{n\to\infty}\int_{\mathbb{R}^d}f(x)\mu_n(dx)=\int_{\mathbb{R}^d}f(x)\mu(dx),~\forall f\in\{g+\alpha\phi_p:\alpha\in\mathbb{R}, g\in C_c(\mathbb{R}^d)\},$$
   where $C_c(\mathbb{R}^d)$ stands for the class of continuous functions of compact support in $\mathbb{R}^d$. In this paper, for a measure $\mu$, we always use $\mathbb{E}_{\mu}$ to denote the expectation with respect to $\mathbb{P}_{\mu}$, the probability measure under which the SBM has initial value $X_0=\mu$. Let $\psi$ be a function of the form
   $$\psi(u)=a u+b u^2+\int_{(0,\infty)}\left(e^{-ru}-1+ru\right)n(dr),~u\geq0,$$
   where $a\in\mathbb{R}$, $b\geq0$ and $n$ is a $\sigma$-finite measure on $(0,\infty)$ such that $\int_{(0,\infty)}(r\wedge r^2) n(dr)<\infty$.
   \par
    The SBM with initial value $\mu\in M_p$ and branching mechanism $\psi$ is a measure-valued process, whose transition probabilities  are characterized through their Laplace transforms. For any $\mu\in M_p$ and nonnegative continuous function $\phi$ satisfying $\sup_{x\in\mathbb{R}^d}\frac{\phi(x)}{\phi_p(x)}<\infty$, we have
   \begin{align}\label{ujytas}
  \mathbb{E}_\mu\left[e^{-<X_t,\phi>}\right]=e^{-\int_{\mathbb{R}^d} u(t,x)\mu(dx)},
   \end{align}
  where $<X_t,\phi>:=\int_{\mathbb{R}^d}\phi(x)X_t(dx)$ and $u(t,x)$ is the unique positive solution to the following nonlinear partial differential equation:
  \begin{align}
   \begin{cases}
   \frac{\partial u(t,x)}{\partial t}=\frac{1}{2}\Delta u(t,x)-\psi(u(t,x)),\cr
   u(0,x)=\phi(x).\nonumber
   \end{cases}
   \end{align}
In above, $\Delta u(t,x):=\sum^d_{i=1}\frac{\partial^2 u(t,x)}{\partial x_i^2}$ is the Laplace operator. $\{X_t\}_{t\geq0}$ is called a supercritical (critical, subcritical) SBM if $a<0~(=0, >0).$ Our works only consider a typical critical branching mechanism, called binary branching, which is given by
   $$\psi(u)=u^2.$$
In this case, the partial differential equation above is reduced to
  \begin{align}\label{iu676}
   \begin{cases}
   \frac{\partial u(t,x)}{\partial t}=\frac{1}{2}\Delta u(t,x)-u^2(t,x),\cr
   u(0,x)=\phi(x).
   \end{cases}
   \end{align}
Let $\{W_t\}_{t\geq 0}$ be a $d$-dimensional standard Brownian motion. Note that the partial differential equation above is equivalent to the integral equation:
  \begin{align}\label{rfwerg}
  u(t,x)+\int^t_0\mathbb{E}_x\left[u^2(t-s,W_s)\right]ds=\mathbb{E}_x\left[\phi(W_t)\right],
  \end{align}
where for $x\in\mathbb{R}^d$, $\mathbb{E}_x$ stands for the expectation with respect to the probability $\mathbb{P}_{x}$, the probability measure under which $\{W_t\}_{t\geq 0}$ starts from $x$. Furthermore, one can also use the martingale problem to characterize the SBM; see Perkins \cite[p159]{perkins}. We refer the reader to Etheridge \cite{Etheridge}, Perkins \cite{perkins}, Le Gall
\cite{LeGall} and Li \cite{Li} for a more detailed overview to SBM.
\par
 In our work, we consider the SBM starts from the Poisson random measure whose intensity is the Lebeguse measure $\lambda$ on $\mathbb{R}^d$ ($\text{PRM}(\lambda)$ for short). Namely, for any Borel measurable set $A\subset\mathbb{R}^d$,
$$\mathbb{P}_{\text{PRM}(\lambda)}(X_0(A)=k)=\frac{\lambda^k(A)}{k!}e^{-\lambda(A)},~k\geq0.$$
For ease of notation, we write $\mathbb{P}:=\mathbb{P}_{\text{PRM}(\lambda)}$.
\par
For $u>0$, let $B(u):=\{x\in\mathbb{R}^d:|x|<u\}$ be the $d$-dimensional ball with radius $u$ and center at the origin. Write
$$R_t:=\sup\{u>0: X_t(B(u))=0\},$$
with the convention $\sup\emptyset=0$. In other words, $R_t$ is the radius of the largest ball around the origin which does not contain any mass at time $t$ and $B(R(t))$ is the largest empty ball.
\par
This paper aims at showing that after suitable renormalization, $R_t$ converges in distribution to some non-degenerate limit as $t\to\infty$. We note that the renormalization scale depends on the dimension.
\par
The research on the empty ball was first conducted by R\'ev\'esz \cite{reves02} for the critical branching Wiener process model started from $\text{PRM}(\lambda)$. This model, denoted by $\{Z_n\}_{n\geq0}$, is defined as follows. At time $0$, there exist infinite many particles distributed according to $\text{PRM}(\lambda)$. Then, these particles move independently according to the standard normal distribution in unit time. Afterwards, each particle produces children independently according to the Bernoulli distribution $\xi$ in a instant, where $\mathbb{P}(\xi=0)=\mathbb{P}(\xi=1)=1/2$. This forms a random measure at time $1$, denoted by $Z_1$. Similarly, for $n\geq 2$, each particle at time $n-1$, starting from where its parent die, executes a displacement according to the standard normal distribution during time $n-1$ to time $n$ and afterwards executes a reproduction instantly according to $\xi$. This forms a random measure at time $n$, denoted by $Z_n$.
 \par
 Let
 \begin{align}\label{34redfr}
R(n):=\sup\{u>0: Z_n(B(u))=0\}.
\end{align}
 R\'ev\'esz \cite{reves02} proved that $R(n)/n$ converges in distribution to an exponential distribution for the case $d=1$. For $d\geq2$, he presented following two conjectures (see  R\'ev\'esz \cite[Conjecture 1]{reves02} ):\\
 (i) If $d=2$, then for any $r\in(0,\infty)$,
$$\lim_{n\to\infty}\mathbb{P}\left(\frac{R(n)}{\sqrt n}\geq r\right)=e^{-F_2(r)}\in(0,1),$$
where $F_2(r)$ satisfies
$$\lim_{r\to\infty} \frac{F_2(r)}{\pi r^2}=1;$$
(ii) If $d\geq 3$, then for any $r\in(0,\infty)$,
\begin{align}\label{4redff}
\lim_{n\to\infty}\mathbb{P}(R(n)\geq r)=e^{-F_d(r)}\in(0,1),
\end{align}
where $F_d(r)$ satisfies
\begin{align}\label{4redff12}
\lim_{n\to\infty}\frac{F_d(r)}{C_d r^{d-2}}=1,
\end{align}
and $\lim_{d\to\infty}C_d/\big[\frac{\pi^{d/2}}{\Gamma(d/2+1)}\big]^{(d-2)/d}=1$ ($\Gamma(\cdot)$ is the Gamma function). Later, Hu \cite{hu05} partially confirmed R\'ev\'esz's conjecture for $d\geq3$ by showing that $\lim_{n\to\infty}\mathbb{P}(R(n)\geq r)$ exists in $(0,1)$. But (\ref{4redff12}) remains unproven.
\par
Our work gives complete weak convergence results of $R_t$, and theses results are consistent with R\'ev\'esz's conjectures. In $d=1$, we use the modulus of continuity for SBM and Markov property of historical SBM to show that $R_t/t$ converges in law. For $d=2$, by using the scaling property of SBM, we obtain $R_t/\sqrt{t}$ converges in law. For $d\geq3$, we use the mild solution of the PDE (\ref{iu676}) and Feynman-Kac formula to prove that $R_t$ converges in law. Moreover, in our recent study, we believe that our results for the SBM model can facilitate us to solve R\'ev\'esz's conjectures, especially in the case of $d=2$.

\par
 A relevant work to our problem is Zhou \cite{zhou08}. Let $\{K_t\}_{t\geq0}$ be a $d$-dimensional $(1+\beta)$-SBM
and $\tau:=\sup\{t\geq0:K_t(B(g(t)))>0\}$ be the local extinction time, where $\beta\in(0,1]$ and $g(t)\geq0$ is a nondecreasing and right continuous function on $[0,\infty)$. Assume that $d\beta<2$. Zhou proved that
\begin{align}
\mathbb{P}_{\lambda}(\tau<\infty)=
\begin{cases}
1,~\text{if} \int^{\infty}_1g^d(y)y^{-(1+\frac{1}{\beta})}dy<\infty;\cr
0,~\text{otherwise}.\nonumber
\end{cases}
\end{align}
This result implies that in our setting, if $d=1$ and $X_0=\lambda$, then the leading order of $R_t$ is $t$ as $t\to\infty$. Conversely, by studying $R_t$ in the case of $d\geq 2$, one can also obtain some results of the local extinction time of $(1+\beta)$-SBM in the case of $d\beta\geq2$.

\par
  We also mention that in the last few decades, limit theory of SBMs concerning its local behaviours has been studied intensively. For example, Iscoe \cite{Iscoe} studied the decay rate of the hitting probability $\mathbb{P}_{\delta_x}(\exists t>0, X_t(B(r))>0)$ as $|x|\to\infty$. Dawson et al. \cite{DIP1989} considered decay rates of the probabilities
$$\mathbb{P}_{m}(X_t(B(x,r))>0),~\mathbb{P}_{m}(\exists t\geq \delta~\text{s.t.}~X_t(B(x,r))>0 )$$
~\text{and}
$$\mathbb{P}_{m}(\exists t\geq 0~\text{s.t.}~X_t(B(x,r))>0 )~\text{as}~r\to 0,$$
where $\delta,~r>0$, $x\in\mathbb{R}^d$, $B(x,r):=\{y\in\mathbb{R}^d:|y-x|\leq r\}$ and $m$ is a finite measure on $\mathbb{R}^d$. Namely, the probabilities above consider the SBM hits an arbitrarily small ball. Note that
$$\mathbb{P}(R_t\geq r)=1-\mathbb{P}(X_t(B(r))>0).$$
Their works can also give some results for $R_t$. However, since they assumed $X_0=m$ is a finite measure, the SBM will die out in finite time. This implies $R_t=\infty$ for large $t$. Therefore, to make this question meaningful, we consider the SBM starts from an infinite measure $\text{PRM}(\lambda)$. For the maximum of supercritical SBM, see Kyprianou et al. \cite{Kyprianoua} and Pinsky \cite{Pinsky95}. Moreover, Ren et al. \cite{Ren21} and Engl\"ander \cite{Englander2004} considered the corresponding large deviation probabilities. In addition, Mueller et al. \cite{perkins17} investigated the left tail probability of the density of the SBM. For the local times of SBM, see Sugitani \cite{Sugitani}, Hong \cite{jieliang18,jieliang19} and Dawson et al. \cite{DVW} and the references therein.

\subsection{Main Results}
We first consider the $1$-dimensional super Brownian motion.
\begin{theorem}\label{thdim1} If $d=1$, then for any $r\in(0,\infty)$,
$$\lim_{t\to\infty}\mathbb{P}\left(\frac{R_t}{t}\geq r\right)=e^{-2r}.$$
\end{theorem}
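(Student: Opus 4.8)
The plan is to turn the problem about the random measure $X_t$ into a deterministic integral, using the Poissonian initial law. Since $u\mapsto X_t(B(u))$ is nondecreasing and left continuous, $\{R_t\ge rt\}=\{X_t(B(rt))=0\}$. Writing $X_0=\sum_i\delta_{x_i}$ for $\mathrm{PRM}(\lambda)$ and using the branching property $X_t=\sum_iX_t^{(i)}$ with the $X_t^{(i)}$ independent and governed by $\mathbb{P}_{\delta_{x_i}}$, the exponential formula for Poisson functionals gives
\begin{align*}
\mathbb{P}\Big(\tfrac{R_t}{t}\ge r\Big)=\mathbb{P}\big(X_t(B(rt))=0\big)=\exp\Big(-\int_{\mathbb{R}}q_t(x)\,dx\Big),\qquad q_t(x):=\mathbb{P}_{\delta_x}\big(X_t(B(rt))>0\big).
\end{align*}
So it suffices to prove $\int_{\mathbb{R}}q_t(x)\,dx\to 2r$ as $t\to\infty$.

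Two inputs drive the estimate of this integral. First, the extinction probability: with constant $\phi\equiv\theta$ the log-Laplace equation \eqref{iu676} reduces to $u'=-u^2$, $u(0)=\theta$, whence $u(t)=\theta/(1+\theta t)\to 1/t$ and $\mathbb{P}_{\delta_x}(X_t\ne 0)=1-e^{-1/t}$, independently of $x$. Second, I would use the historical super-Brownian motion $\{H_t\}$ attached to $\{X_t\}$ started from $\delta_0$ (Perkins \cite{perkins}): $H_t$ is carried by continuous ancestral paths $w$ with $w(0)=0$, its projection at time $t$ is $X_t$, and $w(s)\in\mathrm{supp}(X_s)$ for all $s\le t$ and all $w\in\mathrm{supp}(H_t)$. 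Put
\begin{align*}
R_t^{*}:=\sup\Big\{\,\sup_{0\le s\le t}|w(s)|\ :\ w\in\mathrm{supp}(H_t)\,\Big\}\qquad(\sup\emptyset:=0),
\end{align*}
so $R_t^{*}>0$ iff $X_t\ne 0$ and $R_t^{*}\le\mathcal{R}:=\sup\{|y|:y\in\mathrm{supp}(X_s),\ s\ge 0\}$. By the compact support property of super-Brownian motion (Iscoe \cite{Iscoe}, Dawson--Iscoe--Perkins \cite{DIP1989}) one has $\mathcal{R}<\infty$ a.s., and in $d=1$ the scaling of the stationary equation $\tfrac12V''=V^2$ on $(-L,L)$ with boundary value $+\infty$ gives $\mathbb{P}_{\delta_0}(\mathcal{R}>L)\asymp L^{-2}$, so $\mathbb{E}_{\delta_0}[\mathcal{R}]<\infty$. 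Since the critical SBM dies out in finite time, $R_t^{*}\to 0$ a.s.; as $R_t^{*}\le\mathcal{R}\in L^{1}$, dominated convergence gives $\mathbb{E}_{\delta_0}[R_t^{*}]\to 0$.

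It remains to compare $q_t$ with $(1-e^{-1/t})\,\ind_{\{|x|<rt\}}$. For $|x|<rt$, on $\{X_t\ne 0,\ X_t(B(rt))=0\}$ every $w\in\mathrm{supp}(H_t)$ has $|w(t)|\ge rt$ while $w(0)=x$, so it is displaced by at least $rt-|x|$; translating to the origin,
\begin{align*}
0\le \mathbb{P}_{\delta_x}(X_t\ne 0)-q_t(x)\le \mathbb{P}_{\delta_0}\big(R_t^{*}\ge rt-|x|\big).
\end{align*}
For $|x|\ge rt$, charging $B(rt)$ forces some path from $x$ to be displaced by at least $|x|-rt$, so $q_t(x)\le\mathbb{P}_{\delta_0}(R_t^{*}\ge|x|-rt)$. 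Integrating both bounds over $x$ and using $\int_0^\infty\mathbb{P}_{\delta_0}(R_t^{*}\ge L)\,dL=\mathbb{E}_{\delta_0}[R_t^{*}]$,
\begin{align*}
\Big|\int_{\mathbb{R}}q_t(x)\,dx-2rt\,(1-e^{-1/t})\Big|\le 4\,\mathbb{E}_{\delta_0}[R_t^{*}]\longrightarrow 0 .
\end{align*}
Since $2rt(1-e^{-1/t})\to 2r$, this yields $\int_{\mathbb{R}}q_t(x)\,dx\to 2r$, hence $\mathbb{P}(R_t/t\ge r)\to e^{-2r}$.

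The routine ingredients are the Poissonization, the extinction ODE, and the two one-line displacement bounds; the real work is the historical-range estimate, i.e.\ the domination of $R_t^{*}$, uniformly in $t$, by the integrable variable $\mathcal{R}$ --- this is precisely where the modulus of continuity of the historical paths and the Markov property of $\{H_t\}$ are needed. That this crude domination already suffices is special to $d=1$: the spatial integral is one-dimensional, so the contributions of the boundary of $B(rt)$ and of its exterior are both controlled by the single quantity $\mathbb{E}_{\delta_0}[R_t^{*}]\to 0$, while the limiting exponent $2r$ merely records $\mathbb{P}_{\delta_x}(X_t\ne 0)\sim 1/t$ against $\mathrm{Leb}(B(rt))=2rt$.
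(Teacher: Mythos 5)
Your argument is correct in outline, and it takes a genuinely different route from the paper's. The paper first proves the statement for the Lebesgue initial measure (Proposition \ref{lowbound1}) by splitting $\lambda$ into $B(rt+t^{\delta})$ plus a sequence of outer annuli, discretizing time, and using the historical SBM's extinction formula (Lemma \ref{histoexitin}) and uniform modulus of continuity (Lemma \ref{moducontin}) to kill the outer contribution, with a separate Laplace-transform/monotonicity argument based on the integral equation (\ref{rfwerg}) both for the matching upper bound and to transfer from $\mathbb{P}_{\lambda}$ to the Poisson initial law. You instead work under $\mathbb{P}$ from the start: the Poisson exponential formula gives $\exp(-\int q_t)$ directly (this is exactly the paper's display (\ref{pos})), and everything is reduced to the single estimate $\mathbb{E}_{\delta_0}[R_t^{*}]\to 0$, which you get by dominating the ancestral displacement by the total range $\mathcal{R}$, whose tail is $O(L^{-2})$ by the scaling of the boundary blow-up solution of $\tfrac12 v''=v^2$, plus dominated convergence and a.s. extinction. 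Your route buys a cleaner two-sided bound (no separate Laplace-transform step for the upper bound, no monotonicity of $\int u(t,x)\,dx$, no comparison between $\mathbb{P}$ and $\mathbb{P}_{\lambda}$), at the price of importing heavier structural facts about the range of SBM; the paper's route is longer but self-contained, using only the two historical-SBM lemmas it states.

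Two of your inputs are asserted rather than proved and must be shored up for the proof to be complete. First, $R_t^{*}\le\mathcal{R}$ rests on the claim that every $w$ in the closed support of $H_t$ satisfies $w(s)\in S(X_s)$ for all $s\le t$, simultaneously in $s$ and $t$; this is true but not obvious, so either cite the corresponding support statement in Perkins \cite{perkins}, or bypass it by bounding $\mathbb{P}_{\delta_0}(R_t^{*}\ge L)$ directly: if an ancestral path of a particle alive at time $t$ leaves $B(L)$, then the exit measure of $B(L)$ is nonzero, an event of probability $1-e^{-v_L(0)}$ with $v_L$ the maximal nonnegative solution of $\tfrac12 v''=v^2$ on $(-L,L)$, and $v_L(0)=L^{-2}v_1(0)$ by scaling; this gives the uniform-in-$t$ tail $O(L^{-2})$ you need and also replaces the appeal to the compact support property. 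Second, the tail estimate $\mathbb{P}_{\delta_0}(\mathcal{R}>L)\asymp L^{-2}$ (hence $\mathbb{E}_{\delta_0}[\mathcal{R}]<\infty$) is exactly this computation and should carry a citation (Iscoe \cite{Iscoe}, Dawson--Iscoe--Perkins \cite{DIP1989}, or Le Gall \cite{LeGall}). With these references in place, the remaining steps (the Poissonization, the extinction probability $1-e^{-1/t}$ from Lemma \ref{extinprob}, the two displacement bounds and their integration) are all correct, and the argument yields the theorem.
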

Intuitively, the reason for an empty ball $B(u)$ ($u>0$ and may depend on $t$) to form  is that an SBM starting away from $B(u)$ is hard to reach $B(u)$, while the SBM starting from the set $B(u)$ of finite measure will die out in finite time. Since in higher dimensions there are more "particles" in $B(u)$, the SBM starting from $B(u)$ will take a longer time to die out. Thus, in higher dimensions, $R_t$ is smaller.
\begin{theorem}\label{thdim2} If $d=2$, then for any $r\in(0,\infty)$,
$$\lim_{t\to\infty}\mathbb{P}\left(\frac{R_t}{\sqrt t}\geq r\right)=e^{-A_2(r)}\in(0,1),$$
where $A_2(r):=-\log\mathbb{P}_{\lambda}(X_1(B(r))=0)$ satisfying 
$$\lim_{r\to\infty} \frac{A_2(r)}{\pi r^2}=1.$$
\end{theorem}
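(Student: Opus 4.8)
\emph{Reduction.} The idea is to turn $\{R_t/\sqrt t\ge r\}$ into an extinction–on–a–ball event, encode it by the log-Laplace functional of $X$, and then exploit the self-similarity of \eqref{iu676}, whose exponents match the $\sqrt t$ scaling precisely when $d=2$. Concretely: since $B(u)=\bigcup_{0<u'<u}B(u')$ and $X_t$ is a measure, $\{R_t\ge r\sqrt t\}=\{X_t(B(r\sqrt t))=0\}$. For $\rho,\theta>0$ let $u^{(\rho)}_\theta$ be the mild solution of \eqref{iu676} with $\phi=\theta\ind_{B(\rho)}$, so that $\E_{\delta_x}[e^{-\theta X_t(B(\rho))}]=e^{-u^{(\rho)}_\theta(t,x)}$ by \eqref{ujytas}. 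Letting $\theta\uparrow\infty$ and writing
$$v_\rho(t,x):=\lim_{\theta\to\infty}u^{(\rho)}_\theta(t,x)=-\log\P_{\delta_x}\bigl(X_t(B(\rho))=0\bigr),$$
which is a nonnegative solution of \eqref{iu676} on $\{t>0\}$ with singular initial value $+\infty\cdot\ind_{B(\rho)}$ (see \cite{Iscoe,DIP1989}), and conditioning on the $\text{PRM}(\lambda)$ initial state via the branching property of $X$ and the Laplace functional of a Poisson random measure, we obtain
$$\P\bigl(X_t(B(\rho))=0\bigr)=\exp\Bigl(-\int_{\mathbb R^2}\bigl(1-e^{-v_\rho(t,x)}\bigr)\,dx\Bigr).$$

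\emph{Self-similarity and the limit.} If $w$ solves \eqref{iu676}, so does $(s,x)\mapsto t^{-1}w(s/t,\,x/\sqrt t)$; applying this to $u^{(r)}_\theta$ and letting $\theta\uparrow\infty$ yields the scaling identity $v_{r\sqrt t}(t,x)=t^{-1}v_r(1,\,x/\sqrt t)$. Substituting $x=\sqrt t\,y$, where $dx=t\,dy$ because $d=2$ (this is where the dimension enters), gives
$$\int_{\mathbb R^2}\bigl(1-e^{-v_{r\sqrt t}(t,x)}\bigr)\,dx=\int_{\mathbb R^2}t\bigl(1-e^{-t^{-1}v_r(1,y)}\bigr)\,dy.$$
Since $t\mapsto t(1-e^{-c/t})$ is nondecreasing for each $c\ge0$ (an elementary consequence of $e^{u}\ge1+u$) and converges to $c$, the integrand increases pointwise to $v_r(1,y)$, and monotone convergence lets the right-hand side increase to $\int_{\mathbb R^2}v_r(1,y)\,dy$. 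Finally, \eqref{ujytas} with $X_0=\lambda$ and $\phi=\theta\ind_{B(r)}$, $\theta\uparrow\infty$, identifies $\int_{\mathbb R^2}v_r(1,y)\,dy=-\log\P_\lambda(X_1(B(r))=0)=A_2(r)$, so $\P(R_t/\sqrt t\ge r)\to e^{-A_2(r)}$.

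\emph{Finiteness, positivity, and the tail of $A_2$.} The total mass of $X$ under $\P_{\delta_x}$ is a continuous-state branching process with mechanism $\psi(u)=u^2$, for which $\P_{\delta_x}(X_t(\mathbb R^d)=0)=e^{-1/t}$; since $\{X_t(\mathbb R^d)=0\}\subset\{X_t(B(\rho))=0\}$, we get $v_\rho(t,x)\le1/t$, so $v_r(1,\cdot)\le1$ and, by convexity of $-\log$, $v_r(1,x)\le(1-e^{-1})^{-1}\P_{\delta_x}(X_1(B(r))>0)$. Writing $q(D):=\P_{\delta_0}\bigl(X_1(\{|y|>D\})>0\bigr)$, translation invariance gives $\P_{\delta_x}(X_1(B(r))>0)\le q(|x|-r)$ for $|x|>r$, and $q(D)$ decays rapidly (of Gaussian type) as $D\to\infty$ by the known tail estimates for the range of $X_1$ (\cite{Iscoe,DIP1989}); hence $v_r(1,\cdot)\in L^1(\mathbb R^2)$, so $A_2(r)<\infty$, while $A_2(r)>0$ because $\P_{\delta_0}(X_1(B(r))=0)<1$. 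For the upper asymptotic bound, $A_2(r)=\int_{\mathbb R^2}v_r(1,x)\,dx\le\pi r^2+(1-e^{-1})^{-1}\int_{B(r)^c}q(|x|-r)\,dx=\pi r^2+O(r)$, so $\limsup_{r\to\infty}A_2(r)/(\pi r^2)\le1$. For the lower bound, if $x\in B(r-M)$ then $B(x,M)\subset B(r)$, so by translation invariance $v_r(1,x)\ge v_M(1,0)$ and thus $A_2(r)\ge v_M(1,0)\,\pi(r-M)^2$; since $\{X_1(B(M))=0\}\downarrow\{X_1(\mathbb R^2)=0\}$ as $M\to\infty$ we have $v_M(1,0)\to-\log\P_{\delta_0}(X_1(\mathbb R^2)=0)=1$, whence $\liminf_{r\to\infty}A_2(r)/(\pi r^2)\ge1$. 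Therefore $A_2(r)/(\pi r^2)\to1$.

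\emph{Main difficulty.} No single step is deep: the $d=2$ case goes through because the exponents of the quadratic-branching equation are compatible with the $\sqrt t$ scaling, and because $v_r(1,\cdot)$ is squeezed, on the bulk of $B(r)$, against the spatially homogeneous extinction value $1/s$, while its boundary layer has width $O(1)$ and so costs only $O(r)$. The one genuinely analytic input is the rapid, Gaussian-type spatial decay of $v_r(1,\cdot)$ at infinity --- equivalently, a tail bound for the range of $X_1$ started from a point --- which underlies both $A_2(r)<\infty$ and the upper asymptotic and which we import from \cite{Iscoe,DIP1989}; the remaining care goes into the construction, the mild-equation characterisation, and the scaling identity for the singular-data solution $v_\rho$.
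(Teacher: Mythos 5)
Your proposal is correct and follows the same overall architecture as the paper: identify $\{R_t\ge r\sqrt t\}$ with $\{X_t(B(r\sqrt t))=0\}$, reduce the Poisson initial condition to the exponential of $\int(1-e^{-v})\,dx$, use the $d=2$ scaling of the log-Laplace equation to collapse everything to time $1$, and obtain $A_2(r)\sim\pi r^2$ by a bulk estimate ($v_r(1,x)\le 1$ and $v_r(1,x)\to 1$ inside $B(r)$) plus a boundary-layer estimate outside $B(r)$. Two of your steps are executed differently from the paper, both legitimately. First, for passing from $\int(1-e^{-v_{r\sqrt t}(t,\cdot)})$ to $A_2(r)$ you use the scaling substitution and the monotonicity of $t\mapsto t(1-e^{-c/t})$ with monotone convergence; the paper instead squeezes via $v\le 1/t$ and $\int v\ge\int(1-e^{-v})\ge(1-\tfrac1{2t})\int v$ (its Step~4, mirroring the $d=1$ argument). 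Your version is arguably cleaner and also makes transparent that $\mathbb{P}_\lambda(X_t(B(r\sqrt t))=0)=e^{-A_2(r)}$ is constant in $t$. Second, and more substantively, for the decisive upper bound on $A_2(r)$ the paper proves the spatial decay of $v_r(1,\cdot)$ internally: it shows $M(r)=\sup_{|x|\ge r,\,t>0}u(t,x)<\infty$ by contradiction using the historical modulus of continuity (its Lemma~\ref{moducontin}), then applies the Feynman--Kac representation and the strong Markov property at the hitting time of $B(r)$ to extract the Gaussian factor $\mathbb{P}_0(\inf_{s\le t}|x+W_s|\le r)$. You instead bound $v_r(1,x)\le(1-e^{-1})^{-1}\mathbb{P}_{\delta_x}(X_1(B(r))>0)\le(1-e^{-1})^{-1}q(|x|-r)$ via convexity of $-\log$ on $[0,1-e^{-1}]$, and import the tail bound on $q(D)=\mathbb{P}_{\delta_0}(X_1(\{|y|>D\})>0)$ from the literature. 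This works, and the convexity trick is a nice shortcut, but the imported estimate is exactly the one nontrivial analytic input of the proof, and your citation of \cite{Iscoe,DIP1989} is non-specific; to make this airtight you should either quote a precise fixed-time support/exit estimate (only $\int_0^\infty(1+D)q(D)\,dD<\infty$ is needed, so even polynomial decay with exponent larger than $2$ suffices, which follows from the historical modulus of continuity with $c$ large), or reproduce the paper's Feynman--Kac argument. The remaining items (lower bound via $B(x,M)\subset B(r)$ and $v_M(1,0)\uparrow 1$, positivity and finiteness of $A_2$) coincide with the paper's Step~3 and are fine.
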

\begin{remark}Recall that $R(n)$ is defined in (\ref{34redfr}). Assume that $d=2$. In \cite[Theorem 2]{reves02}, R\'ev\'esz proved that for any $\varepsilon>0$ there exist $0<c(\varepsilon)<C(\varepsilon)<\infty$ such that
$$c(\varepsilon)e^{-2\pi(1+\varepsilon)r^2}<\liminf_{n\to\infty}\mathbb{P}\left(\frac{R(n)}{\sqrt n}>r\right)\leq \limsup_{n\to\infty}\mathbb{P}\left(\frac{R(n)}{\sqrt n}>r\right)\leq C(\varepsilon)e^{-2\pi(1-\varepsilon)r^2}.$$
\end{remark}
From above two theorems, one can see that $R_t\to\infty$ for $1\leq d<3$. This is because the SBM suffers local extinction in low dimensions. However, since in high dimensions ($d\geq3$), the SBM is persistent (see \cite[p49]{Etheridge}), we don't need  any renormalization for $R_t$.

\begin{theorem}\label{thdim3}If $d\geq 3$, then for any $r\in(0,\infty)$,
$$\lim_{t\to\infty}\mathbb{P}(R_t\geq r)=e^{-\kappa_dr^{d-2}},$$
where $\kappa_d:=-\lim_{t\to\infty}\int_{\mathbb{R}^d}\log \mathbb{P}_{\delta_x}(X_t(B(1))=0) dx\in(0,\infty)$.
\end{theorem}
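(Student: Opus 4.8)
Since $B(r)$ is open, the sets $\{X_t(B(u))=0\}$ are downward closed in $u$ and $\{R_t\ge r\}=\{X_t(B(r))=0\}$, so the plan is to compute $\lim_{t\to\infty}\mathbb{P}(X_t(B(r))=0)$. I would first pass to a single-ancestor quantity. Writing $X_0=\sum_i\delta_{x_i}$ for the atoms of the $\mathrm{PRM}(\lambda)$ and using the branching property $X_t=\sum_iX_t^{(i)}$, with $X_t^{(i)}$ independent copies started from $\delta_{x_i}$, one gets $\mathbb{P}(X_t(B(r))=0\mid X_0)=\prod_i\mathbb{P}_{\delta_{x_i}}(X_t(B(r))=0)$. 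Set $V_r(t,x):=-\log\mathbb{P}_{\delta_x}(X_t(B(r))=0)$, which, by \reff{ujytas} and a standard approximation of $\ind_{B(r)}$ by nonnegative continuous functions, is the increasing limit as $n\to\infty$ of the (mild) solutions of \reff{iu676} with initial data $n\,\ind_{B(r)}$. The first-moment bound $1-e^{-V_r(t,x)}=\mathbb{P}_{\delta_x}(X_t(B(r))>0)\le\mathbb{E}_{\delta_x}[X_t(B(r))]=\mathbb{P}_x(W_t\in B(r))$, together with $\int_{\mathbb{R}^d}\mathbb{P}_x(W_t\in B(r))\,dx=|B(r)|<\infty$, shows $\int_{\mathbb{R}^d}(1-e^{-V_r(t,x)})\,dx<\infty$, so the exponential formula for Poisson random measures gives
\begin{equation}\label{eq-pp-rep}
\mathbb{P}(R_t\ge r)=\exp\Big(-\int_{\mathbb{R}^d}\big(1-e^{-V_r(t,x)}\big)\,dx\Big).
\end{equation}

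Next I would read off two properties of $V_r$ from \reff{iu676}. First, the scaling invariance of the equation: if $u^{\phi}$ is its solution with initial value $\phi$, then $(t,x)\mapsto c^2u^{\phi}(c^2t,cx)$ solves it with initial value $c^2\phi(c\,\cdot)$; taking $\phi=n\,\ind_{B(1)}$, so that $c^2\phi(c\,\cdot)=c^2n\,\ind_{B(1/c)}$, and letting $n\to\infty$ yields $V_r(t,x)=r^{-2}V_1(t/r^2,x/r)$ and hence
\begin{equation}\label{eq-pp-scal}
\int_{\mathbb{R}^d}V_r(t,x)\,dx=r^{d-2}\,I(t/r^2),\qquad I(s):=\int_{\mathbb{R}^d}V_1(s,y)\,dy .
\end{equation}
Second, a uniform a priori bound: under $\mathbb{P}_{\delta_x}$ the total mass $\langle X_t,1\rangle$ is a Feller diffusion with $\mathbb{P}_{\delta_x}(\langle X_t,1\rangle=0)=e^{-1/t}$ (equivalently $u^{\theta}(t)=\theta/(1+\theta t)$ for constant $\phi\equiv\theta$), so $\sup_xV_r(t,x)\le 1/t$ for every $r>0$. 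Since $0\le v-(1-e^{-v})\le v^2/2$, this bound gives $0\le\int_{\mathbb{R}^d}V_r(t,x)\,dx-\int_{\mathbb{R}^d}(1-e^{-V_r(t,x)})\,dx\le\tfrac1{2t}\int_{\mathbb{R}^d}V_r(t,x)\,dx$, so by \reff{eq-pp-rep}--\reff{eq-pp-scal} it suffices to prove that $I(s)$ is finite for $s$ large and converges, as $s\to\infty$, to a constant $\kappa_d\in(0,\infty)$; this $\kappa_d$ then equals $-\lim_t\int_{\mathbb{R}^d}\log\mathbb{P}_{\delta_x}(X_t(B(1))=0)\,dx$, and $\mathbb{P}(R_t\ge r)\to e^{-\kappa_dr^{d-2}}$.

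To study $I$, fix $s_0>0$. By the Markov property of $X$ and \reff{ujytas}, on $[s_0,\infty)$ the function $V_1$ solves $\partial_t V_1=\tfrac12\Delta V_1-V_1^2$ started from the bounded continuous datum $V_1(s_0,\cdot)$ (bounded on balls by parabolic smoothing and by $V_1(s_0,\cdot)\le 1/s_0$, integrable near infinity by the first-moment bound, whence $I(s_0)<\infty$). Writing this in mild form, integrating over $x\in\mathbb{R}^d$ and using $\int_{\mathbb{R}^d}P_ug\,dx=\int_{\mathbb{R}^d}g\,dx$ for the Brownian semigroup $P$, I would obtain $I(t)=I(s_0)-\int_{s_0}^t\int_{\mathbb{R}^d}V_1(u,y)^2\,dy\,du$ for $t\ge s_0$; thus $t\mapsto I(t)$ is non-increasing and nonnegative on $[s_0,\infty)$, so $\kappa_d:=\lim_{s\to\infty}I(s)$ exists in $[0,\infty)$. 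It remains to exclude $\kappa_d=0$. If $\kappa_d=0$ then by the reduction above $\mathbb{P}(X_t(B(1))=0)\to 1$; but for $d\ge3$ the critical SBM started from $\mathrm{PRM}(\lambda)$ is persistent (see \cite[p49]{Etheridge}), i.e. $X_t$ converges in law to a non-degenerate equilibrium $X_\infty$ with mean measure $\lambda$. Picking a continuous $\phi$ with compact support in $B(1)$, $0\le\phi\le1$, $\phi\not\equiv0$, we have $\{X_t(B(1))=0\}\subseteq\{\langle X_t,\phi\rangle\le\varepsilon\}$ and $\{\mu:\langle\mu,\phi\rangle\le\varepsilon\}$ is closed, so the portmanteau theorem and $\varepsilon\downarrow0$ give $\limsup_t\mathbb{P}(X_t(B(1))=0)\le\mathbb{P}(\langle X_\infty,\phi\rangle=0)<1$, since $\mathbb{E}[\langle X_\infty,\phi\rangle]=\int\phi>0$. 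This contradiction forces $\kappa_d\in(0,\infty)$, completing the proof.

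The step I expect to be the main obstacle is the last one, $\kappa_d\in(0,\infty)$. Finiteness and existence of the limit are soft, coming from the monotonicity furnished by the integrated mild equation; strict positivity, however, is not formal --- it needs the genuine input that critical super-Brownian motion in dimension $d\ge3$ is persistent, together with some care in turning weak convergence of $X_t$ into an upper bound on $\mathbb{P}(X_t(B(1))=0)$. A secondary, routine technical point is justifying the Poisson exponential formula (interchanging the infinite product with the expectation) and the continuous-function approximation needed to apply \reff{ujytas} to the discontinuous datum $n\,\ind_{B(r)}$; both are harmless once one knows $\int_{\mathbb{R}^d}(1-e^{-V_r(t,x)})\,dx<\infty$.
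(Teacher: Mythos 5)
Your proposal is correct, and its skeleton coincides with the paper's: the Poisson exponential formula reducing $\mathbb{P}(R_t\ge r)$ to $\int_{\mathbb{R}^d}(1-e^{-V_r(t,x)})\,dx$, the bound $V_r\le 1/t$ together with $0\le v-(1-e^{-v})\le v^2/2$ to replace $1-e^{-V_r}$ by $V_r$, the parabolic scaling giving $\int V_r(t,\cdot)\,dx=r^{d-2}I(t/r^2)$, and the monotonicity of $I$ obtained by restarting the log-Laplace equation at a positive time and integrating its mild form (the paper does this at finite $\theta$ via the semigroup property of $u^{(\psi)}_\theta$ and then lets $\theta\to\infty$; your restart at $s_0$ with datum $V_1(s_0,\cdot)\le 1/s_0$ is the same device applied directly to the limit, and is fine). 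You diverge from the paper exactly at the two endpoints of $\kappa_d\in(0,\infty)$. For finiteness the paper quotes \cite[Lemma 3.2]{DIP1989} (namely $u^1(t,x)\le C(d)p(t+1,x)$), whereas you get $I(s_0)<\infty$ from Markov's inequality and the first-moment formula ($1-e^{-V_1}\le \mathbb{P}_x(W_{s_0}\in B(1))$, upgraded to a bound on $V_1$ itself using $V_1\le 1/s_0$) plus monotonicity; this is more elementary and self-contained. For positivity the paper runs a Paley--Zygmund second-moment computation yielding the quantitative bound $\kappa_d\ge c(d)>0$ with an explicit constant, while you import persistence of critical SBM for $d\ge3$ (convergence to a nondegenerate equilibrium with intensity $\lambda$) and conclude via portmanteau that $\limsup_t\mathbb{P}(X_t(B(1))=0)<1$; this is softer but valid, with the caveats that it uses the persistence theorem as a black box (the paper invokes it only as motivation) and that you state it under the $\mathrm{PRM}(\lambda)$ start although the standard statement (Etheridge, p.~49) concerns the Lebesgue start --- harmless, since your own reduction shows the two limits agree, so you may apply it under $\mathbb{P}_\lambda$. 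In short: same architecture, with the two key estimates replaced by an elementary first-moment bound (finiteness) and persistence plus portmanteau (positivity); the paper's versions buy explicit two-sided constants $c(d)\le\kappa_d\le C(d)$, yours buys independence from \cite{DIP1989} at the price of relying on persistence.
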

\begin{remark} For $d\geq 3$, R\'ev\'esz \cite[Theorem 3]{reves02} proved that there exist constants $0<c_d<C_d<\infty$ such that
$$e^{-c_dr^{d-2}}\leq\liminf_{n\to\infty}\mathbb{P}(R(n)>r)\leq \limsup_{n\to\infty}\mathbb{P}(R(n)>r)\leq e^{-C_dr^{d-2}}.$$
Later, Hu \cite{hu05} proved that $\lim_{n\to\infty}\mathbb{P}(R(n)>r)$ exists. However, Hu did not give any explicit expression of the limit.
\end{remark}

\begin{remark}  Although in all theorems above we consider $X_0=\text{PRM}(\lambda)$, it is still true if $X_0$ is the Lebesgue measure on $\mathbb{R}^d$. Moreover, there exists a constant $T>0$ such that $\mathbb{P}_{\lambda}\left(R_t/t\geq r\right)$ is decreasing (w.r.t. $t$) when $d=1$ and $t\geq T$; $\mathbb{P}_{\lambda}\left(R_t/\sqrt t\geq r\right)=\mathbb{P}_{\lambda}\left(R_1\geq r\right)~\text{for}~d=2~\text{and}~t>0;$ $\mathbb{P}_{\lambda}\left(R_t\geq r\right)$ is increasing when $d\geq 3$ and $t\geq T$.
\end{remark}
\par
The rest of this paper is organized as follows. In Section \ref{sechy}, we collect some properties of the total mass process $\{X_t(\mathbb{R}^d)\}_{t\geq0}$ and the historical super-Brownian motion. In Section \ref{sec1}, we use the modulus of continuity for historical SBM to prove that for $d=1$,
$$\lim_{t\to\infty}\mathbb{P}_{\lambda}\left(X_t(B(rt))=0\right)=e^{-2r}.$$
Then, by (\ref{rfwerg}), we argue that, under $\mathbb{P}_{\lambda}$ and $\mathbb{P}_{\text{PRM}(\lambda)}$, $R_t/t$ have the same limit (in the sense of convergence in distribution). Theorem \ref{thdim1} is then proved. In Section \ref{sec2}, we prove Theorem \ref{thdim2} by the scaling property of the $2$-dimensional SBM. We prove Theorem \ref{thdim3} in Section \ref{sec3}. The idea is to use the mild solution of the partial differential equation (\ref{iu676}) to establish the existence of the limit. Then, using the second moment method and \cite[Lemma 3.2]{DIP1989}, we conclude that the limit is non-degenerate.
\section{Preliminaries}\label{sechy}
The total mass process $\{X_t(\mathbb{R}^d)\}_{t\geq0}$ with initial measure $m\in M_p$ is the so-called continuous state branching process with initial value $m(\mathbb{R}^d)$. We refer the reader to \cite{Li20} for a more detailed overview to it. The following lemma considers the Laplace transform and extinction probability of the continuous state branching process, which can be found in \cite[p22]{Etheridge}.
\begin{lemma}\label{extinprob} Let $\theta>0$, $d\geq 1$. Then
$$\mathbb{E}_m\left[e^{-\theta X_t(\mathbb{R}^d)}\right]=e^{-\frac{\theta m(\mathbb{R}^d)}{1+\theta t}}$$
and
$$\mathbb{P}_m(X_t(\mathbb{R}^d)=0)=e^{-\frac{m(\mathbb{R}^d)}{t}},$$
where by convention, if $m(\mathbb{R}^d)=\infty$, then
$$\mathbb{E}_m\left[e^{-\theta X_t(\mathbb{R}^d)}\right]=0.$$
\end{lemma}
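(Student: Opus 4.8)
\noindent The plan is to reduce the statement to a scalar Riccati equation, using that the total mass $Y_t:=X_t(\mathbb{R}^d)$ of a binary-branching super-Brownian motion is a continuous-state branching process with branching mechanism $\psi(u)=u^2$. Heuristically one wants to plug the constant function $\phi\equiv\theta$ into the Laplace characterization $\mathbb{E}_\mu[e^{-<X_t,\phi>}]=e^{-\int u(t,x)\mu(dx)}$ of \reff{ujytas}; but $\phi\equiv\theta$ is not admissible there, since $\sup_x\theta/\phi_p(x)=\infty$, so I would first run the argument with compactly supported test functions and then pass to the limit. Fix a nondecreasing sequence $\chi_n\in C_c(\mathbb{R}^d)$ with $0\le\chi_n\le 1$ and $\chi_n\uparrow 1$, and set $\phi_n:=\theta\chi_n$. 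Each $\phi_n$ is admissible, so $\mathbb{E}_m[e^{-<X_t,\phi_n>}]=e^{-\int u_n(t,x)m(dx)}$, where $u_n$ is the unique nonnegative solution of \reff{iu676} (equivalently, of the mild equation \reff{rfwerg}) with $u_n(0,\cdot)=\phi_n$.

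Next I would identify $\lim_n u_n$. Since \reff{iu676} is translation invariant, the spatially constant function $v(t):=\theta/(1+\theta t)$, which solves the ODE $v'(t)=-v^2(t)$, $v(0)=\theta$ (separate variables), is the solution of \reff{iu676} with constant initial datum $\theta$. Because $\psi(u)=u^2$ is nondecreasing on $[0,\infty)$, the comparison principle for the semilinear heat equation \reff{iu676} yields $u_n\le u_{n+1}\le v\le\theta$, hence $u_n\uparrow u_\infty\le v$. Letting $n\to\infty$ in \reff{rfwerg} — using $\mathbb{E}_x[\phi_n(W_t)]=\theta\,\mathbb{E}_x[\chi_n(W_t)]\uparrow\theta$ and dominated convergence for the time integral, all integrands being bounded by $\theta^2$ — shows that $u_\infty$ satisfies \reff{rfwerg} with initial datum $\theta$, so $u_\infty=v$ by uniqueness. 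Monotone convergence in the exponent then gives, since $<X_t,\phi_n>\uparrow\theta X_t(\mathbb{R}^d)$,
$$\mathbb{E}_m\big[e^{-\theta X_t(\mathbb{R}^d)}\big]=\lim_{n\to\infty}\mathbb{E}_m\big[e^{-<X_t,\phi_n>}\big]=\lim_{n\to\infty}e^{-\int u_n(t,x)m(dx)}=e^{-v(t)m(\mathbb{R}^d)}=e^{-\frac{\theta m(\mathbb{R}^d)}{1+\theta t}};$$
for $m(\mathbb{R}^d)=\infty$ the exponent diverges and this reads $0$, which is the stated convention.

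For the extinction probability I would let $\theta\to\infty$ in the identity just proved. As $e^{-\theta X_t(\mathbb{R}^d)}\downarrow\ind_{\{X_t(\mathbb{R}^d)=0\}}$ pointwise and is bounded by $1$, dominated convergence gives
$$\mathbb{P}_m\big(X_t(\mathbb{R}^d)=0\big)=\lim_{\theta\to\infty}\mathbb{E}_m\big[e^{-\theta X_t(\mathbb{R}^d)}\big]=\lim_{\theta\to\infty}e^{-\frac{\theta m(\mathbb{R}^d)}{1+\theta t}}=e^{-\frac{m(\mathbb{R}^d)}{t}},$$
using $\theta/(1+\theta t)\to 1/t$; this again reads $0$ when $m(\mathbb{R}^d)=\infty$.

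The main obstacle is precisely the admissibility gap: justifying that the Laplace functional of $X_t$ tested against the non-admissible constant $\theta$ is still governed by \reff{iu676}, i.e. proving $u_n\uparrow v$ and interchanging the limit with $\int(\cdot)\,m(dx)$. This rests only on the comparison and monotone-convergence properties of \reff{iu676}--\reff{rfwerg} and is routine; alternatively one may bypass it entirely by invoking the classical fact, recorded in \cite[p22]{Etheridge}, that the total mass of a superprocess over a conservative spatial motion is a CSBP with the same branching mechanism, after which nothing remains but the elementary computation $v(t)=\theta/(1+\theta t)$.
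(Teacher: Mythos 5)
Your proposal is correct, but note that the paper does not prove this lemma at all: it simply records it as a known fact, citing Etheridge's book (the total mass of a superprocess with conservative spatial motion is a continuous-state branching process with the same mechanism, here the Feller-type CSBP with $\psi(u)=u^2$, whose log-Laplace functional $v$ solves $v'=-v^2$, $v(0)=\theta$, so $v(t)=\theta/(1+\theta t)$, and $\theta\to\infty$ gives the extinction probability $e^{-m(\mathbb{R}^d)/t}$). Your argument reconstructs this from the paper's own ingredients \reff{ujytas}--\reff{rfwerg}: you approximate the non-admissible constant test function $\theta$ by $\theta\chi_n$ with $\chi_n\in C_c$, use monotonicity of the log-Laplace functionals in the initial datum together with the bound $u_n\le\mathbb{E}_x[\phi_n(W_t)]\le\theta$ from the mild equation, pass to the limit in \reff{rfwerg}, and identify the limit with the spatially constant solution; monotone convergence in the exponent and then $\theta\to\infty$ finish both claims, including the $m(\mathbb{R}^d)=\infty$ convention. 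This buys a self-contained proof at the cost of two small facts you invoke without proof but which are standard: uniqueness of bounded solutions of \reff{rfwerg} with constant initial datum (a Gronwall argument; the paper's uniqueness statement is only formulated for $\phi_p$-dominated data) and, if you want the chain $u_n\le u_{n+1}\le v$, a comparison principle for \reff{iu676} --- though the comparison with $v$ is actually dispensable, since $u_n\le\theta$ already justifies the limit passage and uniqueness then forces $u_\infty=v$. The paper's route (citation to the CSBP reduction) is shorter and is the standard bookkeeping; yours makes the lemma independent of that external input.
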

\par
In the remaining of this section, we always assume $d=1$. We first give a brief introduction to the historical SBM. For an explicit definition and a more elaborated discussion on it, we refer the reader to \cite[p187]{perkins}. Let $D(\mathbb{R}_+)$ be the space of c$\grave{\text{a}}$dl$\grave{\text{a}}$g paths from $\mathbb{R}_+$ to $\mathbb{R}$ with the Skorokhod topology. For $y_{\cdot}\in D(\mathbb{R}_+)$ and $t\geq0$, let $y^t_{\cdot}=y_{\cdot\wedge t}$. Define the stopped function space
$$\hat{E}:=\{(t,y^t_{\cdot}):t\geq0,~y_{\cdot}\in D(\mathbb{R}_+)\}$$
with the subspace topology it inherits from $\mathbb{R}_+\times D(\mathbb{R}_+)$. Then $\hat{E}$ is a Polish space. Define an $\hat{E}$-valued process $\{\hat{W}_t\}_{t\geq0}$:
$$
\hat{W}_t:=(t,W_{\cdot\wedge t}).
$$
Let $D(\mathbb{R}_+,\hat{E})$ be the space of c$\grave{\text{a}}$dl$\grave{\text{a}}$g paths from $\mathbb{R}_+$ to $\hat{E}$. For $x\in\mathbb{R}$ and $A\in\mathcal{B}(D(\mathbb{R}_+,\hat{E}))$, we define
$$\hat{\mathbb{P}}_x(\hat{W}_\cdot\in A):=\mathbb{P}_x((\cdot,W^{\cdot})\in A).$$
Then $(\{\hat{W}_t\}_{t\geq0}, (\hat{\mathbb{P}}_x)_{x\in\mathbb{R}})$ is an $\hat{E}$-valued Borel strong Markov process. For a space $E$, we use $M_F(E)$ to denote the space of finite measures on $E$. Thus, for $m\in M_F(\mathbb{R})$, we can construct an $M_F(\hat{E})$-valued superprocess $\{Y_t\}_{t\geq0}$ with spatial motion $\{\hat{W}_t\}_{t\geq0}$, binary branching and initial value $m$ (by identifying $\mathbb{R}=\{(0,y^0_{\cdot}): y_{\cdot}\in D(\mathbb{R}_+)\}\subset \hat{E}$, so $M_F(\mathbb{R})\subset M_F(\hat{E})$). Let $\mathbb{Q}_m$ be the corresponding probability measure. The historical SBM $\{H_t\}_{t\geq0}$ (with respect to the original SBM $\{X_t\}_{t\geq0}$ with initial value $m$) is defined by
$$H_t(A):=Y_t(\Pi^{-1}(A)), A\subset D_t(\mathbb{R}_+),$$
where $D_t(\mathbb{R}_+):=\{~y_{\cdot}\in D(\mathbb{R}_+):y_{\cdot}=y^t_{{\cdot}}\}$ and $\Pi((t,y^t_{\cdot}))=y^t_{\cdot}$ is the projection map from $\hat{E}$ to $D(\mathbb{R}_+)$. Moreover, $\{X_t\}_{t\geq0}$ can be obtained through
$$X_t(A):=H_t(\{y_{\cdot}\in D_t(\mathbb{R}_+):y_t\in A\})~\text{for}~A\in\mathcal{B}(\mathbb{R}).$$
\par

Let $S(H_t)$ be the closed support of the random measure $H_t$ and $C(\mathbb{R}_+)$ be the space of continuous functions from $\mathbb{R}_+$ to $\mathbb{R}$. The following lemma gives a uniform modulus of continuity for all the paths in $S(H_t),~t\geq 0$; see \cite[p195]{perkins}.
\begin{lemma}\label{moducontin} Let $c>2$ be a constant. There exists a random variable $\Delta$ such that $\mathbb{Q}_m$ almost surely, for all $t\geq0$,
$$S(H_t)\subset\left\{y_{\cdot}\in C(\mathbb{R}_+):|y_r-y_s|\leq c|(r-s)\log(r-s)|^{1/2}, \forall r,s>0,|r-s|\leq\Delta\right\}.$$
Moreover, there are constants $\rho>0$ depending only on $c$ and $\kappa>0$ depending only on $d,~c$ such that
$$
\mathbb{Q}_m(\Delta\leq r)\leq \kappa m(1)r^{\rho}~\text{for}~r\in[0,1].
$$
\end{lemma}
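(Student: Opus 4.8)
The plan is to prove the tail bound $\mathbb{Q}_m(\Delta\le r)\le \kappa\, m(1)\, r^{\rho}$ first, since the almost-sure inclusion follows from it by Borel--Cantelli once $\rho>0$. Write $\ell(h):=c\,|h\log h|^{1/2}$ for the target modulus at scale $h=|r-s|$. The central observation is that a path $y_\cdot\in S(H_t)$ with an increment $|y_r-y_s|>\ell(r-s)$ can only arise from mass alive on the time interval $[s,r]$, so by the strong Markov property of the historical process at time $s$ it suffices to control, started from $H_s$, the probability of producing a descendant path whose displacement over $[s,r]$ exceeds $\ell(r-s)$. First I would reduce the uniform-in-$t$, all-pairs statement to a countable dyadic family: it is enough to bound, for each dyadic interval $[s,r]$ with $h=r-s=2^{-k}$, the probability that some path in $S(H_r)$ has displacement $>\ell(h)$ over $[s,r]$, then sum over the $O(2^{k})$ such intervals per unit time and over $k\ge k_0$ with $2^{-k_0}=r$. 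The projection consistency of historical paths guarantees that controlling $S(H_r)$ along this grid controls $S(H_t)$ for all $t\ge r$ and all sub-intervals, with a standard chaining argument filling the gaps between grid points.

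For the core estimate, fix $s<r$ with $h=r-s$. Conditioning on $\mathcal F_s$ and applying the branching (canonical-measure) decomposition of the historical process started from $H_s$ gives
$$\mathbb{Q}_m\big(\exists\,y\in S(H_r):|y_r-y_s|>\ell(h)\big)\le \mathbb{E}_{\mathbb{Q}_m}\big[\langle H_s, v_h\rangle\big],$$
where $v_h$ evaluated at an ancestor path with endpoint $z$ equals $\mathbb{N}_{z}\big(X_h(\{w:|w-z|>\ell(h)\})>0\big)$, the canonical-cluster probability that a super-Brownian motion started at $z$ places mass at distance more than $\ell(h)$ in time $h$. Because this event depends only on the displacement, translation invariance of Brownian motion makes $v_h$ the constant $\gamma(h):=\mathbb{N}_{0}(X_h(\{|w|>\ell(h)\})>0)$; since the process is critical, $\mathbb{E}_{\mathbb{Q}_m}[H_s(1)]=m(1)$ by Lemma \ref{extinprob}, so the right-hand side equals $m(1)\gamma(h)$. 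It remains to show
$$\gamma(h)\le \frac{C_d}{h}\,e^{-\ell(h)^2/(2h)}=\frac{C_d}{h}\,h^{\,c^2/2}.$$
The crude bound $\gamma(h)\le \mathbb{N}_0(X_h\neq 0)=1/h$ is immediate from Lemma \ref{extinprob} and the canonical-measure relation $\mathbb{P}_{\delta_0}(X_h=0)=e^{-\mathbb{N}_0(X_h\neq 0)}$; the refinement that supplies the Gaussian displacement factor $e^{-\ell(h)^2/2h}$ is exactly a Dawson--Iscoe--Perkins hitting estimate, for which I would invoke \cite[Lemma 3.2]{DIP1989}, or argue directly from the log-Laplace equation \reff{rfwerg} together with the comparison $v\le 1/t$ against the maximal solution.

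Plugging $\ell(h)^2/(2h)=(c^2/2)\log(1/h)$ into the core estimate gives $\gamma(2^{-k})\le C_d\,2^{-k(c^2/2-1)}$, so the contribution of scale $k$, after the union bound over $O(2^{k})$ intervals, is $\lesssim m(1)\,2^{-k(c^2/2-2)}$. Summing over $k\ge k_0$ with $2^{-k_0}=r$ yields $\mathbb{Q}_m(\Delta\le r)\le \kappa\, m(1)\,r^{\rho}$ with $\rho=c^2/2-2$, positive precisely because $c>2$; note $\rho$ depends only on $c$ while $\kappa$ absorbs the dimensional constant $C_d$, as the statement requires. The main obstacle is the core hitting estimate $\gamma(h)\le (C_d/h)e^{-\ell(h)^2/2h}$: upgrading the easy first-moment identity $\mathbb{N}_0[X_h(\{|w|>\ell\})]=\mathbb{P}_0(|W_h|>\ell)\le 2e^{-\ell^2/2h}$, which only controls expected mass, to a bound on the probability of positive mass carrying the correct $1/h$ survival factor, is the crux, and is where the superprocess structure rather than a naive Markov inequality must be used. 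A secondary subtlety is the rigorous uniform-in-$t$ reduction, which relies on the projection consistency of the historical process so that a bad increment, once created over $[s,r]$, is recorded in $S(H_t)$ for every $t\ge r$.
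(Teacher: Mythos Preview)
The paper does not prove this lemma; it is quoted directly from Perkins \cite[p.195]{perkins}, so there is no in-paper argument to compare against. Your sketch follows what is indeed the standard route for such results---dyadic discretization in time, a canonical-measure displacement estimate over a single dyadic interval, and a Borel--Cantelli sum over scales---and the exponent $\rho=c^2/2-2$ you obtain explains transparently why the hypothesis $c>2$ is needed.

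There is, however, a genuine gap in your outline: the infinite time horizon. Your per-interval bound
\[
\mathbb{Q}_m\big(\exists\,y\in S(H_{s+h}):|y_{s+h}-y_s|>\ell(h)\big)\le \mathbb{E}_{\mathbb{Q}_m}[H_s(1)]\,\gamma(h)=m(1)\gamma(h)
\]
is independent of $s$, because the total mass is a critical martingale. Hence the union bound over all dyadic intervals $[j2^{-k},(j+1)2^{-k}]$, $j\ge 0$, at a fixed scale $k$ diverges; your phrase ``$O(2^k)$ such intervals per unit time'' hides the fact that there are infinitely many unit times to cover, and the statement requires a single $\Delta$ valid for all $t\ge 0$. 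Even the refinement $\mathbb{E}_{\mathbb{Q}_m}[1-e^{-\gamma(h)H_s(1)}]=1-\exp\{-m(1)\gamma(h)/(1+\gamma(h)s)\}$ from Lemma \ref{extinprob} decays only like $1/s$, which still sums to infinity over $j$. Handling the unbounded time axis requires an additional structural input---for instance, working cluster-by-cluster under the excursion measure and exploiting that each cluster has finite lifetime, or passing through the branching-particle approximation as Perkins does---rather than a bare first-moment union bound over intervals.

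A secondary point: \cite[Lemma 3.2]{DIP1989}, as used in this paper in Section~\ref{sec3}, bounds $u^1(t,x)=-\log\mathbb{P}_{\delta_x}(X_t(B(1))=0)$, i.e.\ the probability of \emph{charging a small ball}. Your $\gamma(h)=\mathbb{N}_0(X_h(\{|w|>\ell(h)\})>0)$ is the complementary quantity, the probability of \emph{placing mass outside a large ball}. The estimate you need is a support-radius bound for SBM (also available in the Dawson--Iscoe--Perkins circle of results), but it is not that lemma.
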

 For $t\geq0$, define
 $$M^t_F(D(\mathbb{R}_+)):=\left\{\mu\in M_F(D(\mathbb{R}_+)):\mu\big(\{y_{\cdot}\in D(\mathbb{R}_+):y^t_{\cdot}\neq y_{\cdot}\}\big)=0\right\}$$
In fact, $\{H_t\}_{t\geq0}$ is an inhomogeneous Borel strong Markov process. Furthermore, $H_t\in M^t_F(D(\mathbb{R}_+))$. For $\tau\geq0$ and $\mu\in M^\tau_F(D(\mathbb{R}_+))$, write
$\mathbb{Q}_{\tau, \mu}(H_{\tau+t}\in A):=\mathbb{Q}_m(H_{\tau+t}\in A|H_{\tau}=\mu)$ for measurable set $A\subset M^{\tau+t}_F(D(\mathbb{R}_+))$. In other worlds,
under $\mathbb{Q}_{\tau, \mu}$, the historical SBM starts at time $\tau$ with its initial value $\mu$. The following lemma is borrowed from \cite[p194]{perkins}.
\begin{lemma}\label{histoexitin}
If $A$ is a Borel subset of $D(\mathbb{R}_+)$, and $\mu\in M^{\tau}_F(D(\mathbb{R}_+))$, then for any $t>\tau$,
$$\mathbb{\mathbb{Q}}_{\tau,\mu}(H_s(\{y_{\cdot}\in D(\mathbb{R}_+): y^{\tau}_{\cdot}\in A\})=0,~\forall s\geq t)=e^{-\frac{2\mu(A)}{t-\tau}}.$$
\end{lemma}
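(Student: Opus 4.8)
The plan is to reduce the event in the statement to the extinction of the total mass of a \emph{sub-population} of the historical SBM and then to invoke the extinction probability of a continuous-state branching process. I would build the argument on three ingredients: the branching property of the historical superprocess (which lets me split the initial measure additively); the genealogical structure of $\{H_s\}_{s\geq0}$, namely that the map $y_\cdot\mapsto y^\tau_\cdot$ is frozen along descent, so that whether a path charges $\{y_\cdot\in D(\mathbb{R}_+):y^\tau_\cdot\in A\}$ is already decided at the ancestral time $\tau$; and the fact that the total mass of a binary-branching superprocess is a Feller diffusion for which $0$ is an absorbing state.

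First I would note that, since $\mu\in M^\tau_F(D(\mathbb{R}_+))$, one has $y^\tau_\cdot=y_\cdot$ for $\mu$-a.e.\ $y_\cdot$, so $\mu(\{y_\cdot:y^\tau_\cdot\in A\})=\mu(A)$. I then decompose $\mu=\mu_A+\mu_{A^c}$ with $\mu_A(\cdot):=\mu(\cdot\cap\{y_\cdot:y^\tau_\cdot\in A\})$, both pieces again lying in $M^\tau_F(D(\mathbb{R}_+))$. By the branching property, under $\mathbb{Q}_{\tau,\mu}$ one can realize $H_s=H^{(1)}_s+H^{(2)}_s$ for $s\geq\tau$, where $H^{(1)},H^{(2)}$ are independent historical SBMs started at time $\tau$ from $\mu_A$ and $\mu_{A^c}$. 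The crucial observation is genealogy preservation: under the spatial motion $\hat{W}_s=(s,W_{\cdot\wedge s})$, a descendant at time $s>\tau$ of an ancestor with path-value $y_\cdot$ has path-value $y'_\cdot$ satisfying $(y')^\tau_\cdot=y^\tau_\cdot$, because the appended Brownian increments only extend the path beyond $\tau$. Hence $H^{(1)}_s$ is supported on $\{y_\cdot:y^\tau_\cdot\in A\}$ and $H^{(2)}_s$ on its complement for every $s\geq\tau$, so that $H_s(\{y_\cdot:y^\tau_\cdot\in A\})=H^{(1)}_s(D(\mathbb{R}_+))=:Z_s$ for all $s\geq\tau$, and the event in the statement becomes $\{Z_s=0,\ \forall s\geq t\}$.

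Finally I would identify $\{Z_s\}_{s\geq\tau}$ as the total mass of a binary-branching superprocess, i.e.\ a Feller diffusion started from $Z_\tau=\mu_A(D(\mathbb{R}_+))=\mu(A)$. Since this diffusion has continuous paths and $0$ is absorbing, $\{Z_s=0,\ \forall s\geq t\}=\{Z_t=0\}$ up to a null set, and the extinction formula for this total-mass continuous-state branching process over elapsed time $t-\tau$ from initial mass $\mu(A)$ gives $\mathbb{Q}_{\tau,\mu}(Z_t=0)=e^{-2\mu(A)/(t-\tau)}$, which is the claim. The hardest part will be to make the first two ingredients precise simultaneously: one must verify that the branching decomposition is compatible with the genealogical freezing of $y^\tau_\cdot$, so that the sub-population started from $\mu_A$ accounts for exactly the mass on $\{y^\tau_\cdot\in A\}$ at all later times, and the absorption reduction in the last step relies on the (standard) path continuity and strong Markov property of the total-mass diffusion. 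I remark that the constant $2$ in the exponent, in contrast with Lemma~\ref{extinprob}, reflects the normalization of the branching mechanism under which the superprocess $\{Y_t\}$, and hence $\{H_t\}$, is constructed.
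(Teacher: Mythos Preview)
The paper does not prove this lemma; it simply quotes it from Perkins \cite[p194]{perkins}. Your proposal therefore supplies an argument where the paper gives none, and the line of reasoning you sketch is essentially the standard one and is sound: split $\mu$ according to whether $y^{\tau}_\cdot\in A$, use the branching property to realise $H$ as the sum of two independent historical superprocesses, observe that the path motion $\hat W$ never alters the segment $y^{\tau}_\cdot$ so that the sub-population started from $\mu_A$ carries all (and only) the mass on $\{y^{\tau}_\cdot\in A\}$, and finally reduce to the extinction probability of the total-mass Feller diffusion, for which $0$ is absorbing. The genealogy-preservation step can be made fully rigorous via the first-moment formula: $\mathbb{Q}_{\tau,\mu_A}\bigl[H_s(\{y^{\tau}_\cdot\in A^c\})\bigr]=\int \hat{\mathbb P}_{\tau,z}\bigl((\hat W_s)^{\tau}_\cdot\in A^c\bigr)\,\mu_A(dz)=0$, since $(\hat W_s)^{\tau}_\cdot=z^{\tau}_\cdot$ deterministically.

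Your closing remark about the factor $2$ is pertinent and worth keeping: Lemma~\ref{extinprob} in this paper is stated for the branching mechanism $\psi(u)=u^2$ and gives $e^{-m(\mathbb R^d)/t}$, whereas Perkins' historical construction (from which Lemma~\ref{histoexitin} is lifted verbatim) is normalised so that the total-mass extinction probability is $e^{-2m/t}$. The paper uses both formulas side by side in the proof of Proposition~\ref{lowbound1} without comment, so flagging this inconsistency is a genuine service to the reader; without it the factor $2$ would look like an unexplained discrepancy.
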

\section{Proof of Theorem \ref{thdim1}}\label{sec1}
This section is devoted to prove Theorem \ref{thdim1}. We first present a proposition concerning the empty ball of SBM starting from Lebesgue measure $\lambda$. The proof is mainly inspired by Zhou \cite[Lemma 2.2]{zhou08}. As usual, for a measure $\mu$ on some space $E$, we write $\mu(1):=\mu(E)$ for convenience.
%

\begin{proposition}\label{lowbound1}If $d=1$, then for any $r>0$,
$$\lim_{t\to\infty}\mathbb{P}_{\lambda}\left(X_t(B(rt))=0\right)=e^{-2r}.$$
\end{proposition}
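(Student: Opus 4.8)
The plan is to split the mass of $X_t$ into the part coming from initial mass inside $B(rt)$ and the part coming from initial mass outside $B(rt)$, and to show the first part dies out while the second part never reaches $B(rt)$, both with the right probabilities. Precisely, write $\lambda = \lambda|_{B(rt)} + \lambda|_{B(rt)^c}$ and use the branching property to decompose $X_t = X_t^{(1)} + X_t^{(2)}$ where $X^{(1)}$ starts from $\lambda|_{B(rt)}$ and $X^{(2)}$ from $\lambda|_{B(rt)^c}$, independently. Then
$$\mathbb{P}_{\lambda}(X_t(B(rt))=0) = \mathbb{P}(X^{(1)}_t(B(rt))=0)\cdot\mathbb{P}(X^{(2)}_t(B(rt))=0),$$
and we analyze the two factors separately.

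For the first factor I would use Lemma \ref{extinprob}: since $\lambda(B(rt)) = 2rt$ (here $d=1$), the total mass $X^{(1)}_t(\mathbb{R})$ is $0$ with probability $e^{-2rt/t} = e^{-2r}$, and on that event certainly $X^{(1)}_t(B(rt))=0$. This already gives the upper bound $\mathbb{P}_\lambda(X_t(B(rt))=0)\le \mathbb{P}(X^{(1)}_t(\mathbb{R})=0) = e^{-2r}$ after noting the second factor is at most $1$; more carefully one wants $\{X^{(1)}_t(B(rt))=0\}$ to be essentially $\{X^{(1)}_t(\mathbb{R})=0\}$ up to negligible probability, which follows because any surviving mass from $B(rt)$ at time $t$ is, with high probability, still near $B(rt)$ and in fact likely inside it — this is where the historical SBM and its modulus of continuity (Lemma \ref{moducontin}) enter: a path in the support of $H_t$ started inside $B(rt)$ cannot have traveled more than $O(\sqrt{t\log t}) = o(t)$ by time $t$, so it cannot escape a ball whose radius grew linearly, but one must handle the boundary layer of width $o(t)$ near $|x|=rt$. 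The cleanest route is to compare with balls $B((r\pm\varepsilon)t)$ and let $\varepsilon\to0$ at the end.

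For the second factor, the goal is to show $\mathbb{P}(X^{(2)}_t(B(rt))=0)\to 1$, i.e. the mass started outside $B(rt)$ does not reach the origin's ball. Using Lemma \ref{histoexitin} applied to the historical SBM started from $\lambda|_{B(rt)^c}$ (or directly a first-moment / Feynman–Kac bound via \eqref{rfwerg}), the probability that mass from initial point $x$ with $|x|>rt$ ever enters $B(rt)$ after time... — more simply, $\mathbb{E}[X^{(2)}_t(B(rt))] = \int_{|x|>rt}\mathbb{E}_x[\mathbf{1}_{B(rt)}(W_t)]\,dx = \int_{|x|>rt}\mathbb{P}_x(|W_t|<rt)\,dx$, and since Brownian motion in time $t$ typically moves $O(\sqrt t) = o(t)$, this integral is exponentially small in $t$ (Gaussian tails over a region at distance growing like $t$ from where the walk must end). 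Hence $\mathbb{P}(X^{(2)}_t(B(rt))>0)\le \mathbb{E}[X^{(2)}_t(B(rt))]\to 0$. Combining, $\mathbb{P}_\lambda(X_t(B(rt))=0)\to e^{-2r}$.

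The main obstacle I anticipate is the boundary-layer issue in the first factor: controlling the probability that mass initially in $B(rt)$ survives to time $t$ \emph{but lies outside} $B(rt)$, versus surviving and lying inside. A surviving cluster from a point $x$ with $|x|$ close to $rt$ could drift just past the boundary. The resolution is a two-sided sandwich: for the lower bound use $B((r-\varepsilon)t)$ (mass surviving from $B((r-\varepsilon)t)$ stays, by the modulus of continuity in Lemma \ref{moducontin}, within $B(rt)$ once $t$ is large, since the displacement is $O(\sqrt{t\log t})$), giving $\liminf \ge e^{-2(r-\varepsilon)}\cdot 1$; for the upper bound use that $X_t(B(rt))=0$ forces $X^{(1),(r-\varepsilon)}_t(\mathbb{R})=0$ up to an event of probability controlled by the modulus-of-continuity tail bound $\mathbb{Q}_m(\Delta\le s)\le \kappa m(1)s^\rho$, again negligible as $t\to\infty$. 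Letting $\varepsilon\downarrow 0$ closes the gap. I also need to check the historical-SBM machinery of Lemma \ref{moducontin} and Lemma \ref{histoexitin}, stated for finite initial measures $m$, transfers to the $\sigma$-finite $\lambda$ by a standard restriction/monotonicity argument, decomposing $\lambda|_{B(rt)}$ itself if needed — but this is routine given the branching property.
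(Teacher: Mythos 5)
Your overall decomposition (inner mass dies out, outer mass never arrives) is in the spirit of the paper's proof, but three of your concrete steps fail. First, the bound $\mathbb{P}(X^{(2)}_t(B(rt))>0)\le\mathbb{E}[X^{(2)}_t(B(rt))]$ is not justified: $X_t(B(rt))$ is a diffuse, non-integer-valued mass, so ``probability of being positive $\le$ expectation'' is simply not an available inequality. This is precisely why the paper routes the outer contribution through the historical SBM, Lemma \ref{histoexitin} and a fine time discretization, which converts charging probabilities into quantities of the form $\frac{2l_j}{t}\times(\text{expected mass})$, with the essential $1/(t-s)$-type prefactor coming from the extinction formula rather than from Markov's inequality. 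Second, even granting such a bound, with the cut placed exactly at $rt$ the first moment is not exponentially small: in $d=1$ one has $\int_{|x|>rt}\mathbb{P}_x(|W_t|<rt)\,dx=\mathbb{E}\big[|W_t|\wedge 2rt\big]\asymp\sqrt{t}$, since initial points within distance $O(\sqrt t)$ of the boundary enter $B(rt)$ with probability of order one. The paper avoids this by cutting at $rt+t^{\delta}$ with $\delta\in(1/2,1)$, so the boundary layer is absorbed into the inner factor (whose extinction probability $\exp\{-2(rt+t^{\delta})/t\}$ still tends to $e^{-2r}$) and the remaining outer mass must travel at least $t^{\delta}\gg\sqrt{t}$.

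Third, and most seriously for your upper bound, Lemma \ref{moducontin} does not say that paths in the support of $H_t$ move only $O(\sqrt{t\log t})$ over time $t$: it controls increments over time gaps of length at most $\Delta$ (a random variable which is at most of order one), and chaining it over a macroscopic interval gives a displacement bound of order $t$, not $o(t)$. So you cannot conclude that surviving mass started in $B((r-\varepsilon)t)$ stays inside $B(rt)$, nor control the event ``the inner part survives but all of its time-$t$ mass lies outside $B(rt)$'' by the tail bound $\mathbb{Q}_m(\Delta\le s)\le\kappa m(1)s^{\rho}$; ruling out that event is exactly the delicate point of the upper bound, and the tools you cite do not deliver it. The paper's mechanism is different: it proves $\mathbb{E}_{\lambda|B(rt+t^{\delta})}\big[t^{-1}X_t(B^c(rt))\big]\to0$ (a first-moment estimate at scale $t^{-1}$), then compares Laplace transforms, using $\mathbb{P}_{\lambda}(X_t(B(rt))=0)\le\mathbb{E}_{\lambda}\big[e^{-\eta t^{-1}X_t(B(rt))}\big]$, identifying the limit as $e^{-2r\eta/(1+\eta)}$ via Lemma \ref{extinprob}, and finally letting $\eta\to\infty$. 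This works because, conditionally on survival, the mass in $B(rt)$ is of order $t$, so the escaped mass being $o(t)$ in mean suffices at the level of Laplace transforms even though it does not give an event-level comparison. Your lower-bound half can be repaired along the paper's lines (cut at $rt+t^{\delta}$ and treat the far region with the historical-SBM discretization), but the upper bound needs the Laplace-transform step, which is absent from your proposal.
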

\begin{proof}For a Borel measurable set $A\subset\mathbb{R}$, we denote by $\lambda|A:=\lambda(\cdot\cap A)$ the Lebesgue measure restricted to $A$. Fix $\delta\in(1/2,1)$. Observe that
\begin{align}\label{gfhfb}
\mathbb{P}_{\lambda}\left(X_t(B(rt))>0\right)\leq \mathbb{P}_{\lambda|B(rt+t^{\delta})}\left(X_t(B(rt))>0\right)+ \mathbb{P}_{\lambda|B^c(rt+t^{\delta})}\left(X_t(B(rt))>0\right).
\end{align}
For the first term on the r.h.s. of (\ref{gfhfb}), by Lemma \ref{extinprob},
\begin{align}
 \mathbb{P}_{\lambda|B(rt+t^{\delta})}\left(X_t(B(rt))>0\right)&\leq \mathbb{P}_{\lambda|B(rt+t^{\delta})}\left(X_t(1)>0\right)\cr
 &=1-\exp\left\{-\frac{1}{t}\lambda(B(rt+t^{\delta}))\right\}\cr
 &=1-\exp\left\{-\frac{2}{t}(rt+t^{\delta})\right\}.
\end{align}
Thus,
\begin{equation}\label{rgegeb}
\limsup_{t\to\infty}\mathbb{P}_{\lambda|B(rt+t^{\delta})}\left(X_t(B(rt))>0\right)\leq1-e^{-2r}.
\end{equation}
\par
Next, we use the continuity modulus of historical SBM to prove that the second term on the r.h.s. of (\ref{gfhfb}) tends to $0$. For $j=0,1,...$, let
$$\lambda_j=:\lambda|\left(B(rt+t^{\delta}-1+t^{\delta(j+1)})-B(rt+t^{\delta}-1+t^{\delta j})\right).$$
So, $\lambda|B^c(rt+t^{\delta})=\sum_{j\geq0}\lambda_j$. Recall that under $\mathbb{\mathbb{Q}}_{m}$, $\{H_t\}_{t\geq0}$ stands for the historical super-Brownian of $\{X_t\}_{t\geq0}$ starting from measure $m\in\mathcal{M}_F(\mathbb{R})$. It is simple to see
\begin{align}\label{uikga}
\mathbb{P}_{\lambda|B^c(rt+t^{\delta})}\left(X_t(B(rt))>0\right)&\leq \sum_{j\geq0}\mathbb{P}_{\lambda_j}\left(\exists s\in[0,t], X_s(B(rt))>0\right)\cr
&\leq\sum_{j\geq0}\mathbb{\mathbb{Q}}_{\lambda_j}\left(\exists s\in[0,t], H_s(A(s,rt))>0\right),
\end{align}
where
$$A(u,v):=\left\{y_{\cdot}\in C(\mathbb{R}_+):\inf_{s\leq u}|y_s|\leq v\right\}~\text{for}~u,~v\geq0.$$
Fix $\bar{\delta}\in(0,2\delta-1)$. For $t>1$, let $l_j:=\left\lfloor t e^{t^{\bar{\delta}(j\vee1)}}\right\rfloor$, $j\geq0$. Define $\mathcal{F}_t:=\sigma(H_s,s\in[0,t])$. By the Markov property of $\{H_t\}_{t\geq0}$ and Lemma \ref{histoexitin},
\begin{align}
&\mathbb{Q}_{\lambda_j}\left[H_{(i+1)t/l_j}(A(it/l_j,rt+1))=0\right]\cr
&=\mathbb{Q}_{\lambda_j}\left[\mathbb{Q}_{it/l_i,H_{it/l_j}}\left[H_{(i+1)t/l_j}(A(it/l_j,rt+1))=0|\mathcal{F}_{it/l_j}\right]\right]\cr
&=\mathbb{Q}_{\lambda_j}\left[\mathbb{Q}_{it/l_i,H_{it/l_j}}\left[H_{(i+1)t/l_j}\left(\Big\{y_{\cdot}\in C(\mathbb{R}_+): y^{it/l_j}_{\cdot}\in A(it/l_j,rt+1) \Big\}\right)=0\Bigg{|}\mathcal{F}_{it/l_j}\right]\right]\cr
&=\mathbb{Q}_{\lambda_j}\left[\mathbb{Q}_{it/l_i,H_{it/l_j}}\left[H_{s}\left(\Big\{y_{\cdot}\in C(\mathbb{R}_+): y^{it/l_j}_{\cdot}\in A(it/l_j,rt+1) \Big\}\right)=0,\right.\right.\cr
&~~~~~~~~\left.\left.~\forall s\geq (i+1)t/l_j\Bigg{|}\mathcal{F}_{it/l_j}\right]\right]\cr
&=\mathbb{Q}_{\lambda_j}\left[\exp\left\{-\frac{2H_{it/l_j}\left(A(it/l_j,rt+1)\right)}{t/l_j}\right\}\right].
\end{align}
Thus, for any $1\leq i\leq l_j-1$, we have
\begin{align}\label{oyaef}
\mathbb{Q}_{\lambda_j}\left[H_{(i+1)t/l_j}(A(it/l_j,rt+1))>0\right]&=1-\mathbb{Q}_{\lambda_j}\left[\exp\left\{-\frac{2}{t/l_j}H_{it/l_j}\left(A(it/l_j,rt+1)\right)\right\}\right]\cr
&\leq \frac{2l_j}{t}\mathbb{Q}_{\lambda_j}\left[H_{it/l_j}(A(it/l_j,rt+1))\right]\cr
&=\frac{2l_j}{t}\int_{\mathbb{R}}\mathbb{Q}_{\delta_x}\left[H_{it/l_j}(A(it/l_j,rt+1))\right]\lambda_j(dx)\cr
&=\frac{2l_j}{t}\int_{\mathbb{R}}\mathbb{P}_{x}\left(\inf_{s\leq it/l_j}|W_s|\leq rt+1\right)\lambda_j(dx)\cr
&\leq\frac{2l_j}{t}\int_{\mathbb{R}}\mathbb{P}_{0}\left(\inf_{s\leq it/l_j}(|x|-|W_s|)\leq rt+1\right)\lambda_j(dx)\cr
&\leq\frac{2l_j}{t}\lambda_j(1)\mathbb{P}_{0}\left(\sup_{s\leq it/l_j}|W_s|> t^{\delta}-2+t^{\delta j}\right),
\end{align}
where the third equality follows from the fact that the mean measure of $H_t$ is the Wiener measure stopped at time $t$ (i.e. one moment formula of $H_t$; see \cite[p191, II.8.5]{perkins}). It is simple to see that for any $t>4$ (thus $t^\delta>2$) and $j\geq0$,
\begin{align}\label{uykty}
&\mathbb{P}_{0}\left(\sup_{s\leq it/l_j}|W_s|> t^{\delta}-2+t^{\delta j}\right)\cr
&=\mathbb{P}_{0}\left(\Big|\sup_{s\leq it/l_j}W_s\Big|> t^{\delta}-2+t^{\delta j}~\text{or}~\Big|\inf_{s\leq it/l_j}W_s\Big|> t^{\delta}-2+t^{\delta j}\right)\cr
&\leq2\mathbb{P}_{0}\left(\sup_{s\leq it/l_j}W_s> t^{\delta}-2+t^{\delta j}\right)\cr
&=2\mathbb{P}_{0}\left(\big|W_{it/l_j}\big|> t^{\delta}-2+t^{\delta j}\right)\cr
&=4\mathbb{P}_{0}\left(\frac{W_{it/l_j}}{\sqrt {it/l_j} }> \frac{t^{\delta}-2+t^{\delta j}}{\sqrt {it/l_j}}\right)\cr
&=4\mathbb{P}_{0}\left(\frac{W_{it/l_j}}{\sqrt {it/l_j} }>t^{\left(\delta(j\vee1)-\frac{1}{2}\right)}/2\right)\cr
&\leq \frac{8}{\sqrt{2\pi}}{t^{-(\delta(j\vee1)-\frac{1}{2})}}\exp\left\{-\frac{t^{2\delta (j\vee1)-1}}{8}\right\},
\end{align}
where the second equality follows from the reflection property of the Brownian motion (see \cite[p45]{Peres}) and the last inequality comes from the following classical estimate for standard normal random variable $X$:
$$\mathbb{P}(X>x)\leq \frac{1}{x\sqrt {2\pi}}e^{-\frac{x^2}{2}},~x>0.$$
Plugging (\ref{uykty}) into (\ref{oyaef}) yields that there exists $C_1>0$ depending only on $r,~\delta$ and $\bar{\delta}$ such that for any $t>C_1$ and $j\geq0$,
\begin{align}\label{jkfga}
&\mathbb{Q}_{\lambda_j}\left[H_{(i+1)t/l_j}(A(it/l_j,rt+1))>0\right]\cr
&\leq \frac{16l_j}{t\sqrt{2\pi}}\lambda_j(1){t^{-(\delta(j\vee1)-\frac{1}{2})}}\exp\left\{-\frac{t^{2\delta (j\vee1)-1}}{8}\right\}\cr
&\leq\frac{16e^{t^{\bar{\delta}(j\vee1)}}}{\sqrt{2\pi}}3(r+1)t^{\delta((j\vee1)+1)}{t^{-(\delta(j\vee1)-\frac{1}{2})}}\exp\left\{-\frac{t^{2\delta (j\vee1)-1}}{8}\right\}\cr
&\leq \exp\left\{-\frac{t^{2\delta (j\vee1)-1}}{9}\right\}.
\end{align}
Fix $j\geq0$. Consider $\{X_t\}_{t\geq0}$ starts from $\lambda_j$. Write
$$
T_j:=\inf\{s\geq0:X_s(B(rt))>0\}
$$
with the convention that $\inf \emptyset=+\infty$. Since under the event $\{\exists s\in[0,t], H_s(A(s,rt))>0\}$, we have $0<T_j\leq t$, there exists an integer $i\in\{0,1,...,l_j-1\}$ such that $T_j\in (it/l_j,(i+1)t/l_j]$. For $i\geq 1$, if the SBM
$\{X_t\}_{t\geq0}$ has charged the set $B(rt+1)$ at time $(i-1)t/l_j$ then $H_{it/l_j}(A((i-1)t/l_j,rt+1))>0$. Otherwise, it has not charge $B(rt+1)$, then the support process for $\{X_t\}_{t\geq0}$ has to travel a distance of at least $1$ on time interval $[(i-1)t/l_j,T_j]\subset[(i-1)t/l_j,(i+1)t/l_j]$, which implies $\Delta<2t/l_j$ (use the modulus continuity and the fact $2t/l_j\to0$). For $i=0$, since
$$X_0=\lambda_j=\lambda|\left(B(rt+t^{\delta}-1+t^{\delta(j+1)})-B(rt+t^{\delta}-1+t^{\delta j})\right),$$
the support process for $\{X_t\}_{t\geq0}$ has to travel a distance of at least $t^{\delta}-2+t^{\delta j}$ on time interval $[0,t/l_j]$, which also implies $\Delta<2t/l_j$. Putting these together,
there exists $C_2>0$ depending only on $r,~\delta,~\bar{\delta},~\kappa$ and $\rho$ such that for any $t>C_2$ and $j\geq0$,
\begin{align}
&\mathbb{Q}_{\lambda_j}(\exists s\in[0,t], H_s(A(s,rt))>0)\cr
&\leq \sum^{l_j-1}_{i=1}\mathbb{Q}_{\lambda_j}\left[H_{(i+1)t/l_j}(A(it/l_j,rt+1))>0\right]+\mathbb{Q}_{\lambda_j}(\Delta\leq 2t/l_j)\cr
&\leq l_j\exp\left\{-\frac{t^{2\delta (j\vee1)-1}}{9}\right\}+\lambda_j(1)\kappa2^{\rho}\exp\left\{-\rho t^{\bar{\delta}(j\vee1)}\right\}\cr
&\leq te^{t^{\bar{\delta}(j\vee1)}}\exp\left\{-\frac{t^{2\delta (j\vee1)-1}}{9}\right\}+3(r+1)t^{2\delta(j\vee1+1)}\kappa2^{\rho}\exp\left\{-\rho t^{\bar{\delta}(j\vee1)}\right\}\cr
&\leq\exp\left\{-\rho t^{\bar{\delta}(j\vee1)}/2\right\},
\end{align}
where the second inequality follows from (\ref{jkfga}) and Lemma \ref{moducontin}. Plugging above into (\ref{uikga}) yields that

\begin{align}
\mathbb{P}_{\lambda|B^c(rt+t^{\delta})}\left(X_t(B(rt))>0\right)&\leq\sum_{j\geq 2}\exp\left\{-\rho t^{\bar{\delta}j}/2\right\}+2\exp\left\{-\rho t^{\bar{\delta}}/2\right\}\cr
&\leq\int^{\infty}_1 \exp\left\{-\rho t^{\bar{\delta}x}/2\right\}dx+2\exp\left\{-\rho t^{\bar{\delta}}/2\right\}\cr
&=\frac{1}{\bar{\delta}\log t}\int^{\infty}_{\rho t^{\bar{\delta}}/2} \frac{e^{-u}}{u}du+2\exp\left\{-\rho t^{\bar{\delta}}/2\right\}\cr
&\leq \left[\frac{2}{\rho t^{\bar{\delta}}\bar{\delta}\log t}+2\right]\exp\left\{-\rho t^{\bar{\delta}}/2\right\}.
\end{align}
Thus,
\begin{equation}\label{ytjry12}
\lim_{t\to\infty}e^{\rho t^{\bar{\delta}}/3}\mathbb{P}_{\lambda|B^c(rt+t^{\delta})}\left(X_t(B(rt))>0\right)=0.
\end{equation}
This, combined with (\ref{gfhfb}) and (\ref{rgegeb}), gives
\begin{equation}\label{kigds}
\liminf_{t\to\infty}\mathbb{P}_{\lambda}\left(X_t(B(rt))=0\right)\geq e^{-2r}.
\end{equation}
\par
Next, we deal with the upper bound. By the first moment formula of SBM (see \cite[p38]{LeGall}), we have for any $\varepsilon\in(0,r)$,
\begin{align}
\mathbb{E}_{\lambda|B(rt+t^{\delta})}\left[\frac{1}{t}X_t(B^c(rt))\right]&=\frac{1}{t}\int_{|x|\leq rt+t^{\delta}}\mathbb{P}_0(|x+W_t|>rt)dx\cr
&=\frac{1}{t}\int_{|x|\leq (r-\varepsilon)t}\mathbb{P}_0(|W_t|>\varepsilon t)dx+\frac{1}{t}\int_{(r-\varepsilon)t<|x|\leq rt+t^{\delta}}1dx\cr
&\leq 2\mathbb{P}_0(|W_t|>\varepsilon t)(r-\varepsilon)+2(\varepsilon t+t^{\delta})/t.\nonumber
\end{align}
By the law of large numbers,
$$
\limsup_{t\to\infty}\mathbb{E}_{\lambda|B(rt+t^{\delta})}\left[\frac{1}{t}X_t(B^c(rt))\right]\leq2\varepsilon.
$$
Letting $\varepsilon\to0$ yields
\begin{equation}\label{iouad43}
\lim_{t\to\infty}\mathbb{E}_{\lambda|B(rt+t^{\delta})}\left[\frac{1}{t}X_t(B^c(rt))\right]=0.
\end{equation}
By the branching property of SBM, for any $\eta>0$,
\begin{align}
&\lim_{t\to\infty}\mathbb{E}_{\lambda}\left[e^{-\eta t^{-1}X_t(B(rt))}\right]\cr
&=\lim_{t\to\infty}\mathbb{E}_{\lambda|B^c(rt+t^{\delta})}\left[e^{-\eta t^{-1}X_t(B(rt))}\right]\mathbb{E}_{\lambda|B(rt+t^{\delta})}\left[e^{-\eta t^{-1}X_t(B(rt))}\right]\cr
&=\lim_{t\to\infty}\mathbb{E}_{\lambda|B(rt+t^{\delta})}\left[e^{-\eta t^{-1}X_t(B(rt))}\right],\nonumber
\end{align}
where the last inequality is because by (\ref{ytjry12}),
$$\lim_{t\to\infty}t^{-1}X_t(B(rt))=0,~\mathbb{P}_{\lambda|B^c(rt+t^{\delta})}\text{-in distribution}.$$
 On the other hand, from (\ref {iouad43}), $\frac{1}{t}X_t(B^c(rt))$ converges in distribution to $0$ (under $\mathbb{P}_{\lambda|B(rt+t^{\delta})}$). Furthermore, by Lemma \ref{extinprob}, we have $t^{-1}X_t(1)$ converges in distribution (under $\mathbb{P}_{\lambda|B(rt+t^{\delta})}$). Therefore, under $\mathbb{P}_{\lambda|B(rt+t^{\delta})}$, $t^{-1}X_t(1)-t^{-1}X_t(B^c(rt)$ converges in distribution. Hence,
\begin{align}
\lim_{t\to\infty}\mathbb{E}_{\lambda}\left[e^{-\eta t^{-1}X_t(B(rt))}\right]&=\lim_{t\to\infty}\mathbb{E}_{\lambda|B(rt+t^{\delta})}\left[e^{-\eta [t^{-1}X_t(1)-t^{-1}X_t(B^c(rt))]}\right]\cr
&=\lim_{t\to\infty}\mathbb{E}_{\lambda|B(rt+t^{\delta})}\left[e^{-\eta t^{-1}X_t(1)}\right]\cr
&=\lim_{t\to\infty}\exp\left\{-\frac{\eta t^{-1}\lambda(B(rt+t^{\delta}))}{1+\eta}\right\}\cr
&=e^{-\frac{2r\eta}{1+\eta}}.
\end{align}
Thus, for any $\eta>0$
 \begin{align}
 \limsup_{t\to\infty}\mathbb{P}_{\lambda}(X_t(B(rt))=0)&=\limsup_{t\to\infty}\mathbb{P}_{\lambda}(t^{-1}X_t(B(rt))=0)\cr
 &\leq\lim_{t\to\infty}\mathbb{E}_{\lambda}[e^{-\eta t^{-1}X_t(B(rt))}]\cr
 &=e^{-\frac{2r\eta}{1+\eta}}.
 \end{align}
 Finally,
$$\limsup_{t\to\infty}\mathbb{P}_{\lambda}(X_t(B(rt))=0)\leq \lim_{\eta\to\infty}e^{-\frac{2r\eta}{1+\eta}}\leq e^{-2r}.$$
This, combined with (\ref{kigds}), concludes the proposition.
\end{proof}
\par
Now, we are ready to present the proof of Theorem \ref{thdim1}. Namely, we are going to prove
$$\lim_{t\to\infty}\mathbb{P}\left(\frac{R_t}{t}\geq r\right)=e^{-2r}~\text{for} ~r>0.$$
The idea of the proof is to use the integral equation (\ref{rfwerg}) to argue that $\mathbb{P}\left(\frac{R_t}{t}\geq r\right)$ and $\mathbb{P}_{\lambda}\left(\frac{R_t}{t}\geq r\right)$ have the same asymptotics. We then use Proposition \ref{lowbound1} to conclude the theorem.

\textbf{Proof of Theorem \ref{thdim1}}. For a point measure $F$, write $u\in F$ if $u$ is an atom of $F$. Recall that $X_0=\sum_{u\in X_0}\delta_{u}$ is a Poisson random measure with intensity measure $\lambda$. Let $\{X^{\delta_{u}}_t\}_{t\geq0}$ be the SBM started from $\delta_{u}$ (i.e. a single particle at position $u$). By the branching property,
\begin{align}\label{pos}
\mathbb{P}\left(\frac{R_t}{t}\geq r\right)&=\mathbb{P}\left(X_t(B(tr))=0\right)\cr
&=\mathbb{P}\left(\forall u\in X_0,X^{\delta_{u}}_t(B(tr))=0\right)\cr
&=\mathbb{E}\left[\Pi_{u\in X_0}\mathbb{P}_{\delta_{u}}(X_t(B(tr))=0)\right]\cr
&=\mathbb{E}\left[e^{\sum_{u\in X_0}\log \mathbb{P}_{\delta_{u}}(X_t(B(tr))=0)}\right]\cr
&=\mathbb{E}\left[e^{-\int_\mathbb{R}-\log \mathbb{P}_{\delta_{x}}(X_t(B(tr))=0)X_0(dx)}\right]\cr
&=e^{-\int_\mathbb{R} \mathbb{P}_{\delta_{x}}(X_t(B(tr))>0)dx },
\end{align}
where the last equality follows from the Laplace  functional formula of Poisson random measures (see \cite[p19, (2.17)]{Bovier}).
\par
For $\theta>0$, let
$$u_{\theta}(t,x):=-\log\mathbb{E}_{\delta_{x}}\left[e^{-<X_t,\theta \ind_{B(tr)}>}\right],~t\geq0,~x\in\mathbb{R}.$$
Since $u_{\theta}(t,x)$ is increasing w.r.t. $\theta$, $u(t,x):=\lim_{\theta\to\infty}u_{\theta}(t,x)$ exists. Therefore,
\begin{align}\label{pos1}
\mathbb{P}_{\delta_{x}}(X_t(B(tr))=0)&=\lim_{\theta\to\infty}\mathbb{E}_{\delta_{x}}\left[e^{-<X_t,\theta \ind_{B(tr)}>}\right]\cr
&=\lim_{\theta\to\infty}e^{-u_{\theta}(t,x)}\cr
&=e^{-u(t,x)}.
\end{align}
From (\ref{rfwerg}), we have
$$u_{\theta}(t,x)+\int^t_0\mathbb{E}_{x}\left[u^2_{\theta}(t-s,W_s)\right]ds=\mathbb{E}_{x}\left[\theta\ind_{B(tr)}(W_t)\right].$$
Integrating w.r.t. $x$ gives
$$\int_\mathbb{R}u_{\theta}(t,x)dx+\int^t_0\int_\mathbb{R}u^2_{\theta}(t-s,y)dyds=2r\theta t.$$
Set
$$G_{\theta}(t):=\int_\mathbb{R}u_{\theta}(t,x)dx=2r\theta t-\int^t_0\int_\mathbb{R}u^2_{\theta}(s,y)dyds.$$
From Lemma \ref{extinprob}, for any $\theta,~t>0$,
\begin{align}\label{3ewedr2ws}
u_{\theta}(t,x)<\frac{1}{t}.
\end{align}
By (\ref{ujytas}),
\begin{align}\label{4re4ffr3}
\mathbb{P}_{\lambda}(X_t(B(tr))=0)&=\lim_{\theta\to\infty}\mathbb{E}_{\lambda}\left[e^{-<X_t,\theta \ind_{B(tr)}>}\right]\cr
&=\lim_{\theta\to\infty}e^{-\int_{\mathbb{R}} u_{\theta}(t,x)dx}\cr
&=e^{-\int_{\mathbb{R}} u(t,x)dx},
\end{align}
where the last inequality follows from L\'evy's monotone convergence theorem. Thus, by Proposition \ref{lowbound1}, there exists some $C_3>1$ depending only on $r$ such that for $t>C_3$ and $\theta>0$,
$$\int_{\mathbb{R}} u_{\theta}(t,x)dx\leq\int_{\mathbb{R}} u(t,x)dx<3r.$$
Then, for $\theta>\frac{3}{2}$ and $t>C_3$,
\begin{align}\label{wewr}
G'_{\theta}(t)&=2r\theta-\int_\mathbb{R}u^2_{\theta}(t,y)dy\cr
&\geq2r\theta-\frac{1}{t}\int_\mathbb{R}u_{\theta}(t,y)dy\cr
&\geq2r\theta-\int_\mathbb{R}u_{\theta}(t,y)dy\cr
&\geq2r\theta-3r\cr
&>0,
\end{align}
where the first inequality follows from (\ref{3ewedr2ws}). Thus, for any $t_2>t_1>C_3$ and $\theta>\frac{3}{2}$,
$$\int_\mathbb{R} u_{\theta}(t_1,x)dx<\int_\mathbb{R} u_{\theta}(t_2,x)dx.$$
Since $\int_\mathbb{R} u(t,x)dx=\lim_{\theta\to\infty}\int_\mathbb{R} u_{\theta}(t,x)dx$, we have

$$\int_\mathbb{R} u(t_1,x)dx\leq\int_\mathbb{R} u(t_2,x)dx.$$
So $\int_\mathbb{R} u(t,x)dx$ is increasing on $(C_3,\infty)$. Thus, by Proposition \ref{lowbound1},
\begin{equation}\label{ikygwd}
\lim_{t\to\infty}e^{-\int_{\mathbb{R}} u(t,x)dx}=\lim_{t\to\infty}\mathbb{P}_{\lambda}(X_t(B(tr))=0)\downarrow e^{-2r}.
\end{equation}
Since $x-\frac{x^2}{2}<1-e^{-x}<x$ for $x>0$, we have
\begin{align}\label{liluada}
\int_{\mathbb{R}} u(t,x)dx&\geq \int_{\mathbb{R}} 1-e^{-u(t,x)}dx\cr
&\geq\int_{\mathbb{R}} u(t,x)-\frac{u^2(t,x)}{2}dx\cr
&\geq\left(1-\frac{1}{2t}\right)\int_{\mathbb{R}} u(t,x)dx.\cr
\end{align}
where the last inequality uses the fact that $u(t,x)=\lim_{\theta\to\infty}u_{\theta}(t,x)\leq\frac{1}{t}$ (see \ref{3ewedr2ws}). This, combined with (\ref{ikygwd}), yields that
\begin{align}\label{oiuwe67}
\lim_{t\to\infty}\int_{\mathbb{R}} 1-e^{-u(t,x)}dx=\lim_{t\to\infty}\int_{\mathbb{R}} u(t,x)dx=2r.
\end{align}
So, by (\ref{pos}) and (\ref{pos1}),
\begin{align}
\lim_{t\to\infty}\mathbb{P}\left(\frac{R_t}{t}\geq r\right)=\lim_{t\to\infty}e^{-\int_\mathbb{R}1-e^{-u(t,x)}dx}=e^{-2r}.\nonumber
\end{align}
We have completed the proof of Theorem \ref{thdim1}.\qed

\section{Proof of Theorem \ref{thdim2}}\label{sec2}
In this section, we are going to prove Theorem \ref{thdim2}. Namely, if $d=2$, then for any $r\in(0,\infty)$,
$$\lim_{t\to\infty}\mathbb{P}\left(\frac{R_t}{\sqrt t}\geq r\right)=e^{-A_2(r)}\in(0,1),$$
where $A_2(r)=-\log\mathbb{P}_{\lambda}(X_1(B(r))=0)$ satisfying
$$\lim_{r\to\infty} \frac{A_2(r)}{\pi r^2}=1.$$
The idea of the proof can be divided into 4 steps:\\
Step 1. By the scaling property, we have
$$\mathbb{P}_{\lambda}\left(X_t\left(B(\sqrt tr)\right)=0\right)=\mathbb{P}_{\lambda}(X_1(B(r))=0)~\text{for}~t>0;$$
Step 2. Use the Feynman-Kac representation to give the desired lower bound of $\mathbb{P}_{\lambda}(X_1(B(r))=0)$;\\
Step 3. From the observation
$$\mathbb{P}_{\lambda}(X_1(B(r))=0)\leq \mathbb{P}_{\lambda|B(r)}(X_1(B(r))=0)$$
 and the extinction probability of $\{X_t(1)\}_{t\geq 0}$, we get the desired upper bound of $\mathbb{P}_{\lambda}(X_1(B(r))=0)$;\\
Step 4. Similar to the proof of Theorem \ref{thdim1}, we obtain
$$\lim_{t\to\infty}\mathbb{P}\left(R_t/\sqrt t\geq r\right)=\lim_{t\to\infty}\mathbb{P}_{\lambda}\left(X_t(B(\sqrt tr))=0\right).$$
\par
\textbf{Proof of Theorem \ref{thdim2}}. \textbf{Step 1}. From \cite[p51]{Etheridge}, we have the following scaling property for 2-dimensional SBM $\{X_t\}_{t\ge0}$ started from Lebesgue measure: for any $\eta,~t>0$ and $A\in\mathcal{B}(\mathbb{R}^2)$,
$$X_t(A)\overset{\text{Law}}=\frac{1}{\eta^2}X_{\eta^2t}(\eta A).$$
Let $\eta=\frac{1}{\sqrt t}$ and $A=B(\sqrt tr)$, then
$$X_t(B(\sqrt tr))\overset{\text{Law}}=\frac{1}{t}X_{1}(B(r)).$$
Thus,
$$\mathbb{P}_{\lambda}\left(X_t\left(B(\sqrt tr)\right)=0\right)=\mathbb{P}_{\lambda}(X_1(B(r))=0).$$
\par
\textbf{Step 2}.
Similar to the case of $d=1$, let
$$u(t,x):=-\log\mathbb{P}_{\delta_x}(X_t(B(r))=0),~t>0,~x\in\mathbb{R}^2.$$
Due to the same reason as (\ref{4re4ffr3}), we have
\begin{equation}\label{kufhn}
\mathbb{P}_{\lambda}(X_1(B(r))=0)=e^{-\int_{\mathbb{R}^2} u(1,x)dx}.
\end{equation}
Thus, to finish Step 2, it suffices to give an upper bound of $u(1,x)$. Fix $\delta>0$, we are going to prove
\begin{equation}\label{ilu57}
M_{\delta}(r):=\sup_{|x|>(1+\delta)r,~t>0}u(t,x)<\infty.
\end{equation}
Suppose that (\ref{ilu57}) is not true. Then, by the fact that $u(t,x)<1/t$, there exist some $|x_n|>(1+\delta)r$, $n\geq 1$ and $t_n\rightarrow0$ such that $u(t_n,x_n)\rightarrow\infty$. Thus,
\begin{align}\label{hjgh34}
\lim_{n\to\infty}\mathbb{P}_{\delta_{x_n}}(X_{t_n}(B(r))>0)&=1-\lim_{n\to\infty}\mathbb{P}_{\delta_{x_n}}(X_{t_n}(B(r))=0)\cr
&=1-\lim_{n\to\infty}e^{-u(t_n,x_n)}\cr
&=1.
\end{align}
On the other hand, since $B(r)\subset B^c(x_n,\delta r)$, we have
\begin{align}\label{738hdsad}
\limsup_{n\to\infty}\mathbb{P}_{\delta_{x_n}}(X_{t_n}(B(r))>0)&\leq\limsup_{n\to\infty}\mathbb{P}_{\delta_{x_n}}(X_{t_n}(B(x_n,\delta r)^c)>0)\cr
&=\limsup_{n\to\infty}\mathbb{P}_{\delta_{0}}(X_{t_n}(B(0,\delta r)^c)>0)\cr
&=0,
\end{align}
where the last equality follows from the continuity modulus of the historical SBM. Thus,  (\ref{738hdsad}) contradicts (\ref{hjgh34}), and therefore (\ref{ilu57}) holds. Furthermore, since $\delta$ and $r$ are arbitrary positive constants, we have
\begin{equation}\label{ilu58}
M(r):=\sup_{|x|\geq r,~t>0}u(t,x)<\infty.
\end{equation}

Let $\phi(x)=\ind_{B(r)}(x)$. For $\theta>0$, let
$$u_{\theta}(t,x):=-\log\mathbb{E}_{\delta_x}\left[e^{-<X_t,\theta \phi>}\right].$$
By the Feynman-Kac representation \cite[p170]{perkins},
$$u_{\theta}(t,x)=\mathbb{E}_x\left[\theta\phi(W_t)e^{-\int^t_0u_{\theta}(t-s,W_s)}\right].$$
Let $T_r:=\inf\{t>0: |W_t|=r\}$.  Then, by the strong Markov property of Brownian motion, for $|x|\geq (1+\delta)r$,
\begin{align}\label{ikqee343}
u_{\theta}(t,x)&=\mathbb{E}_x\left[\ind_{\{T_r<t\}}e^{-\int^{T_r}_0u_{\theta}(t-s,W_s)ds}\mathbb{E}_{W_{T_r}}\left[\theta\phi(W_{t-T_r})e^{-\int^{t-T_r}_0u_{\theta}(t-T_r-s,W_s)ds}\right]\right]\cr
&\leq\mathbb{E}_x\left[\ind_{\{T_r<t\}}u_{\theta}(t-T_r,W_{T_r})\right]\cr
&\leq M(r)\mathbb{P}_{0}\left(\inf_{s\in(0,t)}|x+W_s|\leq r\right),\cr
\end{align}
where the first equality follows from the fact that $\phi(W_t)=0$ for $T_r>t$ and the last inequality follows from (\ref{ilu58}). Let $\{W^1_t\}_{t\geq0}$ be the one-dimensional standard-Brownian motion. Since $\sup _{s\in(0,t)}W^1_s\overset{\text{Law}}=|W^1_t|$, we have
\begin{align}\label{u56jh12}
\mathbb{P}_{0}\left(\inf_{s\in(0,t)}|x+W_s|\leq r\right)&\leq\mathbb{P}_{0}\left(|x|-r\leq\sup_{s\in(0,t)}|W_s|\right)\cr
&\leq 2\mathbb{P}_{0}\left(\sup _{s\in(0,t)}W^1_s\geq(|x|-r)/\sqrt 2\right)\cr
&= 2\mathbb{P}_{0}\left(|W^1_t|/\sqrt t\geq(|x|-r)/\sqrt {2t}\right)\cr
&\leq \frac{4\sqrt t}{\sqrt {\pi}(|x|-r)}e^{-\frac{(|x|-r)^2}{4t}}.
\end{align}
Plugging above into (\ref{ikqee343}) yields that
$$
u_{\theta}(t,x)\leq M(r)\frac{4\sqrt t}{\sqrt {\pi}(|x|-r)}e^{-\frac{(|x|-r)^2}{4t}}.
$$
Since $M(r)$ is decreasing (thus bounded) and $\lim_{\theta\to\infty}u_{\theta}(t,x)=u(t,x)$, there exists some constant $C_4>0$ such that for any $|x|\geq (1+\delta)r$,
$$
u(1,x)\leq M(r)\frac{4}{\sqrt {\pi}(|x|-r)}e^{-\frac{(|x|-r)^2}{4}}<\frac{C_4}{|x|-r}e^{-\frac{(|x|-r)^2}{4}}.
$$

 Thus,
\begin{align}
\int_{\mathbb{R}^2} u(1,x)dx&\leq \int_{|x|\leq (1+\delta)r} 1dx+\int_{|x|>(1+\delta)r}\frac{C_4}{|x|-r}e^{-\frac{(|x|-r)^2}{2}} dx\cr
&\leq\pi(1+\delta)^2r^2+\int_{|x|>(1+\delta)r}\frac{C_4}{|x|-r}e^{-\frac{(|x|-r)^2}{2}} dx\cr
&\leq\pi(1+\delta)^2r^2+\int_{w>\delta r}\frac{C_4}{w}e^{-\frac{w^2}{2}}w^2 dw\cr
&=\pi(1+\delta)^2r^2+C_4e^{-\delta^2r^2/2}.\nonumber
\end{align}
Taking limits yields that
$$\lim_{r\to\infty}\frac{\int_{\mathbb{R}^2} u(1,x)dx}{\pi r^2}\leq 1+\delta.$$
Let $\delta\to0$, then plugging it into (\ref{kufhn}) yields
\begin{equation}\label{iuy9870}
\limsup_{r\to\infty}\frac{-\log \mathbb{P}_{\lambda}(X_1(B(r))=0)}{\pi r^2}\leq1.
\end{equation}
\par
\textbf{Step 3}. Fix $\bar{\delta}\in(0,1)$. It is simple to see that
\begin{align}
\mathbb{P}_{\lambda}(X_1(B(r))=0)&=\exp\left\{\int_{\mathbb{R}^2} \log\mathbb{P}_{\delta_x}(X_1(B(r))=0) dx\right\}\cr
&\leq\exp\left\{\int_{|x|\leq (1-\bar{\delta})r} \log\mathbb{P}_{\delta_x}(X_1(B(r))=0) dx\right\}\cr
&=\exp\left\{\int_{|x|\leq (1-\bar{\delta})r} \log\mathbb{P}_{\delta_0}(X_1(B(x,r))=0) dx\right\}\cr
&\leq\exp\left\{\int_{|x|\leq (1-\bar{\delta})r} \log\mathbb{P}_{\delta_0}(X_1(B(\bar{\delta} r))=0) dx\right\}\cr
&= \exp\left\{\pi(1-\bar{\delta})^2r^2\log\mathbb{P}_{\delta_0}(X_1(B(\bar{\delta} r))=0)\right\}.
\end{align}
By the dominated convergence theorem and Lemma \ref{extinprob},
$$\lim_{r\to\infty}\log\mathbb{P}_{\delta_0}(X_1(B(\bar{\delta} r))=0)=\log\mathbb{P}_{\delta_0}(X_1(\mathbb{R}^2)=0)=-1.$$
Therefore,
$$\liminf_{r\to\infty}\frac{-\log\mathbb{P}_{\lambda}(X_1(B(r))=0)}{\pi r^2}\geq(1-\bar{\delta})^2.$$
The desired lower bound follows by letting $\bar{\delta}\to0$.

\par \textbf{Step 4}. By similar arguments as in the proof of (\ref{pos})-(\ref{oiuwe67}), we obtain that
\begin{align}
\lim_{t\to\infty}\mathbb{P}\left(\frac{R_t}{\sqrt t}\geq r\right)=\lim_{t\to\infty}e^{-\int_{\mathbb{R}^2}1-e^{-u(t,x)}dx}=\lim_{t\to\infty}e^{-\int_{\mathbb{R}^2}u(t,x)dx}.\nonumber
\end{align}
In fact, the mainly changes are to replace $t$, $\mathbb{R}$, Proposition \ref{lowbound1} with $\sqrt t$, $\mathbb{R}^2$,(\ref{iuy9870}), respectively. So, we feel free to omit its details here. Putting all steps together, we get that
\begin{align}
\lim_{t\to\infty}\mathbb{P}\left(\frac{R_t}{\sqrt t}\geq r\right)&=\lim_{t\to\infty}\mathbb{P}_{\lambda}\left(\frac{R_t}{\sqrt t}\geq r\right)\cr
&=\mathbb{P}_{\lambda}\left(X_1(B(r))=0\right)\cr
&=:e^{-A_2(r)}.\nonumber
\end{align}

 We have completed the proof of Theorem \ref{thdim2}.
 \qed

\section{Proof of Theorem \ref{thdim3}}\label{sec3}
In this section, we are going to prove Theorem \ref{thdim3}. Namely, if $d\geq 3$, then for any $r\in(0,\infty)$,
$$\lim_{t\to\infty}\mathbb{P}(R_t\geq r)=e^{-\kappa_dr^{d-2}}\in(0,1).$$
The proof will be divided into 4 steps:\\
Step 1. Using the semigroup property of $u(t,x)$ and the mild form of the PDE (\ref{iu676}) to show that
$$\lim_{t\to\infty}\mathbb{P}_{\lambda}(R_t\geq r)~\text{exists};$$
Step 2. By the scaling property of $u(t,x)$, we obtain
 $$\lim_{t\to\infty}\mathbb{P}(R_t\geq r)=\lim_{t\to\infty}\mathbb{P}_{\lambda}(R_t\geq r)=e^{-\kappa_dr^{d-2}};$$\\
Step 3. Applying \cite[Lemma 3.2]{DIP1989}, we prove $\kappa_d>0$.\\
Step 4. By the second moment method, we show $\kappa_d<\infty$.
\par

\textbf{Proof of Theorem \ref{thdim3}}. \textbf{Step 1.} In this step, we show that $\lim_{t\to\infty}\mathbb{P}_{\lambda}\left(R_t\geq r\right)$ exists. Let $\psi(x)\in C^2_b(\mathbb{R}^d)$ be a non-negative radially symmetric function such that $\{x:\psi(x)>0\}=\{x:|x|\leq r\}$.
Note that
\begin{align}\label{3reff4}
\mathbb{P}_{\delta_{x}}(X_t(B(r))=0)&=\lim_{\theta\to\infty}\mathbb{E}_{\delta_{x}}[e^{-<X_t,\theta \ind_{B(r)}>}]\cr
&=\lim_{\theta\to\infty}\mathbb{E}_{\delta_{x}}\left[e^{-<X_t,\theta\psi(x)>}\right]\cr
&=:\lim_{\theta\to\infty}e^{-u^{(\psi)}_{\theta}(t,x)}\cr
&=:e^{-u(t,x)},
\end{align}
where $u^{(\psi)}_{\theta}(t,x)$ is the unique positive solution to the equation:
\begin{align}
\begin{cases}
\frac{\partial u(t,x)}{\partial t}=\frac{1}{2}\Delta u(t,x)-u^2(t,x),\cr
u(0,x)=\theta\psi(x).
\end{cases}
\end{align}
Fix $t_1\in(0,t)$. From (\ref{rfwerg}), we have
$$
u^{(u^{(\psi)}_{\theta}(t_1,\cdot))}_{\theta}(t-t_1,x)+\int^{t-t_1}_0\mathbb{E}_x\left[\left(u^{(u^{(\psi)}_{\theta}(t_1,\cdot))}_{\theta}(t-t_1-s,W_s)\right)^2\right]ds=\mathbb{E}_x\left[u^{(\psi)}_{\theta}(t_1,W_{t-t_1})\right].
$$
By the semigroup property $u^{(u^{(\psi)}_{\theta}(t_1,\cdot))}_{\theta}(t-t_1,x)=u^{(\psi)}_{\theta}(t,x)$ (see \cite[p32]{LeGall}), we get that
$$
u^{(\psi)}_{\theta}(t,x)+\int^{t-t_1}_0\mathbb{E}_x\left[\left(u^{(\psi)}_{\theta}(t-s,W_s)\right)^2\right]ds=\mathbb{E}_x\left[u^{(\psi)}_{\theta}(t_1,W_{t-t_1})\right].
$$
Since $u^{(\psi)}_{\theta}(t,x)<\frac{1}{t}$, by the dominated convergence theorem,
$$
u(t,x)+\int^{t-t_1}_0\mathbb{E}_x\left[\left(u(t-s,W_s)\right)^2\right]ds=\mathbb{E}_x\left[u(t_1,W_{t-t_1})\right].
$$
We write above into its mild form:
$$u(t,x)=P_{t-t_1}(t_1,x)-\int^t_{t_1}P_{t-s}u^2(s,x)ds.$$
Integrating w.r.t. $x$ and making use of Fubini's theorem, we obtain
\begin{align}
\int_{\mathbb{R}^d} u(t,x)dx&=\int_{\mathbb{R}^d}\left[\int_{\mathbb{R}^d} p_{t-t_1}(x,y)dx\right]u(t_1,y)dy-\int^t_{t_1}\left[\int_{\mathbb{R}^d} \int_{\mathbb{R}^d} p_{t-s}(x,y)dxu^2(s,y)dy\right]ds\cr
&=\int_{\mathbb{R}^d} u(t_1,y)dy-\int^t_{t_1}\int_{\mathbb{R}^d} u^2(s,y)dyds.\nonumber
\end{align}
Thus, $\int_{\mathbb{R}^d} u(t,x)dx$ is decreasing w.r.t. $t$. This, together with (\ref {4re4ffr3}), implies
$$\lim_{t\to\infty}\mathbb{P}_{\lambda}\left(R_t\geq r\right)=\lim_{t\to\infty}e^{-\int_{\mathbb{R}} u(t,x)dx}$$
exists.
\par
\textbf{Step 2.} In this step, we show that $\lim_{t\to\infty}\mathbb{P}\left(R_t\geq r\right)
=e^{-\kappa_dr^{d-2}}$. Let $\phi(x)\in C^2_b(\mathbb{R}^d)$ be a non-negative radially symmetric function such that $\{x:\phi(x)>0\}=\{x:|x|\leq1\}$. Let $\phi_r(x)=\phi(x/r)$. It follows that
\begin{align}
\mathbb{P}_{\delta_{x}}(X_t(B(r))=0)&=\lim_{\theta\to\infty}\mathbb{E}_{\delta_{x}}\left[e^{-<X_t,\theta \ind_{B(r)}>}\right]\cr
&=\lim_{\theta\to\infty}\mathbb{E}_{\delta_{x}}\left[e^{-<X_t,\theta\phi_r(x)>}\right]\cr
&=:\lim_{\theta\to\infty}e^{-u^r_{\theta}(t,x)}\cr
&=e^{-u(t,x)}.
\end{align}
In this step, we write $u^r(t,x):=u(t,x)$ to emphasize that $u(t,x)$ depends on $r$. So,
\begin{equation}\label{987fdvw}
\mathbb{P}_{\delta_{x}}(X_t(B(r))=0)=e^{-u^r(t,x)}.
\end{equation}
Thus, to finish this step it suffices to give an upper bound of $u^r(t,x)$. Note that $u^r_{\theta} (t,x)$ is the unique solution of
\begin{align}
\begin{cases}
\frac{\partial u(t,x)}{\partial t}=\frac{1}{2}\Delta u(t,x)-u^2(t,x),\cr
u(0,x)=\theta\phi_{r}(x).
\end{cases}
\end{align}
Therefore, we have the following scaling property of $u(t,x)$. For $\theta,~\varepsilon>0$,
$$u^r_{\theta} (t,x)={\varepsilon}^{-2}u^{r/\varepsilon}_{\theta {\varepsilon}^2}(t{\varepsilon}^{-2},x{\varepsilon}^{-1}).$$
This yields that
\begin{align}\label{utoy09}
\mathbb{P}_{\lambda}(X_{t}(B(r))=0)&=\lim_{\theta\to\infty}e^{-\int_{\mathbb{R}^d} u^r_{\theta} (t,x)dx}\cr
&=\lim_{\theta\to\infty}e^{-\int_{\mathbb{R}^d} r^{-2}u^1_{\theta r^2}(tr^{-2},xr^{-1})dx}\cr
&=e^{-\int_{\mathbb{R}^d} r^{-2}u^1(tr^{-2},xr^{-1})dx}\cr
&=e^{-r^{d-2}\int_{\mathbb{R}^d} u^1(tr^{-2},x)dx}.
\end{align}
Hence, using the monotonicity of $\int_{\mathbb{R}^d} u(t,x)dx$ and (\ref{3reff4}), we have
$$\lim_{t\to\infty}\mathbb{P}_{\lambda}(X_{t}(B(r))=0)=e^{-\kappa_dr^{d-2}}.$$
This, combined with (\ref{liluada}), yields that
\begin{align}\label{6hdfsr}
\lim_{t\to\infty}\mathbb{P}\left(R_t\geq r\right)&=\lim_{t\to\infty}e^{-\int\left(1-e^{-u(t,x)}\right)dx}\cr
&=\lim_{t\to\infty}e^{-\int u(t,x)dx }\cr
&=\lim_{t\to\infty}\mathbb{P}_{\lambda}\left(R_t\geq r\right)\cr
&=\lim_{t\to\infty}\mathbb{P}_{\lambda}\left(X_t(B(r))=0\right)\cr
&=e^{-\kappa_dr^{d-2}}.
\end{align}
\par
 \textbf{Step 3.}  In this step, we show that $\kappa_d<\infty$. From \cite[Lemma 3.2]{DIP1989}, there exists a constant $C(d)$ depending only on $d$ such that for all $t>1$ and $x\in\mathbb{R}^d$
 $$
 u^1(t,x)<C(d)p(t+1,x),
 $$
 where $p(t,x):=\frac{1}{(2\pi t)^{d/2}}e^{-\frac{|x|^2}{2t}}$ is the density function of the $d$-dimensional Brownian motion. Hence,
 $$\kappa_d=\lim_{t\to\infty}\int_{\mathbb{R}^d}u^1(t,x)dx\leq C(d)<\infty. $$
 \par
\textbf{Step 4.}  In this step, we show that $\kappa_d>0$. In fact, this has been proved in \cite[Lemma 3.3]{DIP1989}. Nevertheless, here we use a different method to prove it. Since
\begin{align}\label{trgetfvda}
\mathbb{P}(R_t\geq r)=e^{-\int\mathbb{P}_{\delta_x}(X_t(B(r))>0)dx},
\end{align}
To prove $\kappa_d>0$, it suffices to get a lower bound of $\mathbb{P}_{\delta_x}(X_t(B(r))>0)$.
\par
By the Paley-Zygmund inequality,
\begin{align}\label{vwevqv}
\mathbb{P}_{\delta_x}(X_t(B(r))>0)&\geq\frac{\mathbb{E}^2_{\delta_x}[X_t(B(r))]}{\mathbb{E}_{\delta_x}[X^2_t(B(r))]}\cr
&=\frac{(P_t\ind_{B(r)}(x))^2}{(P_t\ind_{B(r)}(x))^2+2\int^t_0P_s\left[P_{t-s}\ind_{B(r)}(x)\right]^2ds},
\end{align}
where the equality follows from the moments formula of SBM (see \cite[p38-39]{LeGall}).
\par
In the next, we are going to give a lower bound of $P_t\ind_{B(r)}(x)$. In the following, we assume $t\geq r^2$. Observe that
\begin{align}
e^{-\frac{2r|x|}{2t}}\geq
\begin{cases}
e^{-\frac{|x|^2}{t}},&~|x|\geq r;\cr
e^{-1},&~|x|<r.\nonumber
\end{cases}
\end{align}
Thus,
\begin{equation}\label{dsfse12}
e^{-\frac{2r|x|}{2t}}\geq e^{-1}e^{-\frac{|x|^2}{t}}.
\end{equation}
Observe that if $|y|\leq r$, then
$$|y-x|^2\leq (|x|+|y|)^2\leq |x|^2+2r|x|+r^2.$$
Let $v_d(r)$ be the volume of $d$-dimensional ball with radius $r$. Through simple calculations, we have
\begin{align}\label{fhegdb}
P_t\ind_{B(r)}(x)&=\mathbb{P}_0(|W_t+x|\leq r)\cr
&=\int_{|y|\leq r}\frac{1}{(2\pi t)^{d/2}}e^{-\frac{|y-x|^2}{2t}}dy\cr
&\geq\int_{|y|\leq r}\frac{1}{(2\pi t)^{d/2}}e^{-\frac{|x|^2}{2t}}e^{-\frac{r^2+2r|x|}{2t}}dy\cr
&\geq\int_{|y|\leq r}\frac{1}{(2\pi t)^{d/2}}e^{-\frac{|x|^2}{2t}}e^{-1/2}e^{-\frac{2r|x|}{2t}}dy\cr
&\geq\int_{|y|\leq r}\frac{1}{(2\pi t)^{d/2}}e^{-\frac{|x|^2}{2t}}e^{-3/2}e^{-\frac{|x|^2}{t}}dy\cr
&\geq e^{-3/2}\int_{|y|\leq r}3^{-d/2}\frac{1}{(2\pi t/3)^{d/2}}e^{-\frac{|x|^2}{2t/3}}dy\cr
&\geq e^{-3/2}3^{-d/2}v_d(1)r^dp(t/3,x),\cr
\end{align}
where the third inequality follows from (\ref{dsfse12}).
\par
In the next, we give an upper bound of $\int^t_0 P_s\left[P_{t-s}\ind_{B(r)}(x)\right]^2ds$. Note that for $s\in(0,t)$,
\begin{align}
P_{t-s}\ind_{B(r)}(x)&=\mathbb{P}_0\left(|x+W_{t-s}|\leq r\right)\cr
&=\int_{|y|\leq r}\frac{1}{(2\pi(t-s))^{d/2}}e^{-\frac{|y-x|^2}{2(t-s)}}dy\cr
&\leq \frac{v_d(1)r^d}{(t-s)^{d/2}}.\nonumber
\end{align}
Thus, by the semigroup property of $\{P_t\}_{t\geq0}$, we have
$$
P_s\left[P_{t-s}\ind_{B(r)}(x)\right]^2\leq \left[ \frac{v_d(1)r^d}{(t-s)^{d/2}}\wedge 1\right]P_t\ind_{B(r)}(x),
$$
which yields
\begin{align}\label{dfsv156}
\int^t_0 P_s\left[P_{t-s}\ind_{B(r)}(x)\right]^2ds&\leq P_t\ind_{B(r)}(x)\left[\int^{t-r^2}_0\frac{r^d}{(t-s)^{d/2}}ds+\int^t_{t-r^2}1ds\right]\cr
&=P_t\ind_{B(r)}(x)\left[\int^{t}_{r^2}\frac{r^d}{u^{d/2}}du+r^2\right]\cr
&=P_t\ind_{B(r)}(x)\left[\frac{2}{d-2}(r^2-t^{1-\frac{d}{2}}r^d)+r^2\right]\cr
&\leq 3r^2P_t\ind_{B(r)}(x).
\end{align}
Plugging (\ref{fhegdb}) and (\ref{dfsv156}) into (\ref{vwevqv}) yields that
\begin{align}\label{gfdhfsbv}
\mathbb{P}_{\delta_x}(X_t(B(r))>0)&\geq\frac{P_t\ind_{B(r)}(x)}{P_t\ind_{B(r)}(x)+6r^2}\cr
&\geq \frac{e^{-3/2}3^{-d/2}v_d(1) r^dp(t/3,x)}{7r^2}\cr
&=c(d)r^{d-2}p(t/3,x),
\end{align}
where $c(d):=e^{-3/2}3^{-d/2}v_d(1)/7$ and the second inequality follows from the fact that for $t$ large enough,
\begin{align}
P_t\ind_{B(r)}(x)&=\int_{|y|\leq r}\frac{1}{(2\pi t)^{d/2}}e^{-\frac{|y-x|^2}{2t}}dy\cr
&\leq\frac{1}{(2\pi t)^{d/2}}v_d(1)r^d\cr
&<r^2.\nonumber
\end{align}
Plugging (\ref{gfdhfsbv}) into (\ref{trgetfvda}) yields that for $t$ large enough,
\begin{align}
\mathbb{P}(R_t\geq r)\leq e^{-c(d)r^{d-2}\int_{\mathbb{R}^d }p(t/3,x)dx}=e^{-c(d)r^{d-2}}\nonumber.
\end{align}
Thus, $\kappa_d\geq c_d>0$.
\qed

\textbf{Acknowledgements}
 The second author thanks Hui He for introducing the work of R\'ev\'esz \cite{reves02}, which planted the seed of the current paper. He also would like to thank Lina Ji and Jiawei Liu for useful discussions.

\bigskip
\textbf{References}

\end{document}